\documentclass[12pt,reqno]{amsart}

\usepackage[mathlines]{lineno}

\usepackage{times}
\usepackage{amsfonts}
\usepackage{amssymb}
\usepackage{latexsym}
\usepackage{xcolor}

\newtheorem{theorem}{Theorem}
\newtheorem{lemma}[theorem]{Lemma}

\newtheorem{definition}[theorem]{Definition}
\newtheorem{remark}{Remark}
\newtheorem{corollary}[theorem]{Corollary}
\newtheorem{example}{Example}

\begin{document}

\begin{center}
\Large{On the spectra of  multidimensional normal discrete Hausdorff operators}
\end{center}

\centerline{A. R. Mirotin}

\centerline{amirotin@yandex.ru}

\

\textsc{Abstract.} In the paper the general case of a normal discrete Hausdorff operators in $L^2(\mathbb{R}^d)$  is considered. The main result states that under some natural arithmetic condition the spectrum of such an operator is rotationally invariant. 
Several special cases and examples  are considered. 
\

2020 Mathematics Subject Classification: Primary 47B38; Secondary 47B15, 47A10, 46E30

\

Key words and phrases. Hausdorff operator, discrete Hausdorff operator, symbol of an operator, 
spectrum, Weyl spectrum, Lebesgue space, pantograph equation.

\
\

\section{ Introduction}

In resent two  decades different notions of a Hausdorff operator have been suggested (see e.g., \cite{Ls, CFW, KL, JMAA} and bibliography therein).

Hausdorff operators over the topological group $\mathbb{R}^d$ in a sense of \cite{LL}  (see also \cite{JMAA}) have attracted much attention.
But most of the work in this direction was devoted to the boundedness of such operators in various function spaces , see \cite{CDFZ, CFL, LL, LM, RF} among others. Probably the only exception is a special case of Hausdorff operators, namely the Ces\`{a}ro operator (e.g., \cite{Albanese-Bonet-Ricker, Curbera_Ricker}).

The situation has changed somewhat in recent years. It is shown in \cite{faa, Forum} that every normal Hausdorff operator in
 $L^2(\mathbb{R}^d)$ with self-adjoint perturbation matrices is unitarily equivalent to the multiplication operator
by some matrix-valued function (its matrix symbol) in the space $L^2(\mathbb{R}^d; \mathbb{C}^{2^d})$. This
is an analogue of the Spectral Theorem for the class of operators under consideration. This allowed  to find the norm, to study  the spectrum,
 and to develop functional calculi of such operators \cite{LifMir}.  See also \cite{RJMP}, where the case of  the spaces $L^p(\mathbb{R}^d)$ was considered.

The structure of the spectrum of discrete normal Hausdorff operators in $L^2(\mathbb{R}^d)$  was investigated in \cite{JMS}  for the case of positive or negative definite perturbation matrices. In this paper, the general case of normal discrete Hausdorff operators in $L^2(\mathbb{R}^d)$  is considered. Our interest to such operators is motivated by the fact that discrete Hausdorff operators 
are involved in  functional differential equations (see, e.g., \cite{BellenZennaro, Liu-Li, Ross, Ross2, Zaidi-Van Brunt-Wake} and the bibliography therein, see also Section 4 below), problems in analysis    (see, e.g.,  \cite{makarov}, especially Theorem 1 and formula (12) therein), and quantum mechanics \cite[Chapter VII, \S 2, (7.24)]{MF}.

The main result of this article states that under some natural  arithmetic condition the spectrum of normal discrete Hausdorff  operator is rotationally invariant. 
Several special cases  and examples of functional differential equations  are considered.

\section{Preliminaries}\setcounter{equation}{0}
We study the following special case of a  Hausdorff operator on Euclidean spaces.

\begin{definition}\label{discr}
Let $c=(c(k))_{k\in \mathbb{Z}}$ be a sequences of complex numbers, and $A(k)\in \mathrm{GL}(d,\mathbb{R})$ for all $k\in \mathbb{Z}$. \textit{ A discrete Hausdorff operator} acts on a function $f:\mathbb{R}^d\to \mathbb{C}$
by the rule
\[
\mathcal{H}_{c,A}f(x)=\sum_{k=-\infty}^\infty c(k) f(A(k) x)
\]
 provided the series converges absolutely.
\end{definition}

Without loss of generality one can assume that $A(k)\ne A(l)$ for $k\ne l$.

At the first time such operators in this general form appeared in \cite[Section 4]{CFL} (see also \cite{faa, Forum}), but one can find a lot of  concrete examples in the literature,
see, e.~g., \cite{makarov}, \cite{Ross2}, \cite{Ross} and the bibliography therein, and examples of functional-differential equations below.

\begin{lemma}\label{lem1}
(i)  Let $1\le p\le \infty$. If
$$
N_p(c,A):=\sum_{k=-\infty}^\infty |c(k)||\det A(k)|^{-1/p}
<\infty,
$$
 then the operator $\mathcal{H}_{c,A}$ is bounded in $L^p(\mathbb{R}^d)$ and
its norm does not exceed $N_p(c,A)$.

(ii)  The condition $N_p(c,A)<\infty$  for the $L^p$ boundedness of $\mathcal{H}_{c,A}$ can't be weakened in general.
\end{lemma}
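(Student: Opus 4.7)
For part (i), the plan is a routine application of Minkowski's inequality after a change of variables.  For each $k$, the substitution $y=A(k)x$ gives $\|f(A(k)\,\cdot\,)\|_{L^p(\mathbb{R}^d)} = |\det A(k)|^{-1/p}\|f\|_{L^p(\mathbb{R}^d)}$, with the convention that the exponent is $0$ (and the dilation is isometric) when $p=\infty$.  Applying the triangle inequality to the series defining $\mathcal{H}_{c,A}f$ then yields both absolute convergence of that series in $L^p$ and the estimate $\|\mathcal{H}_{c,A}f\|_p\le N_p(c,A)\|f\|_p$.

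For part (ii), I plan to exhibit a family of $(c,A)$ in which the bound of (i) is actually attained, which forces the quantitative condition $N_p(c,A)<\infty$ to be necessary for boundedness in general.  The endpoint cases are immediate after restricting to $c(k)\ge 0$: applying the operator to a non-negative $f$ and invoking Tonelli gives the equality $\|\mathcal{H}_{c,A}f\|_1=N_1(c,A)\|f\|_1$, and testing the $L^\infty$-norm on $f\equiv 1$ gives $\|\mathcal{H}_{c,A}\|_{L^\infty\to L^\infty}\ge\sum_k c(k)=N_\infty(c,A)$.  For the intermediate range $1<p<\infty$, I would specialize further to scalar dilations $A(k)=a_kI$ with distinct $a_k>0$ and $c(k)\ge 0$, so that $\mathcal{H}_{c,A}$ preserves radial functions.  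Writing $f(x)=g(|x|)$ and making the logarithmic substitution $r=e^t$, $h(t):=g(e^t)e^{dt/p}$, one checks that $\|f\|_p^p=\omega_{d-1}\|h\|_{L^p(\mathbb{R})}^p$ and that the induced operator on $h$ is convolution on $L^p(\mathbb{R})$ with the non-negative discrete measure
\[
\mu=\sum_{k}c(k)\,a_k^{-d/p}\,\delta_{-\log a_k},\qquad \|\mu\|_{\mathrm{TV}}=N_p(c,A).
\]
Since convolution with a non-negative measure on $L^p(\mathbb{R})$ has operator norm equal to its total mass (approach the total mass by testing on $\chi_I$ for long intervals $I$), I conclude $\|\mathcal{H}_{c,A}\|_{L^p\to L^p}\ge N_p(c,A)$; combined with (i) this is an equality on this family, so the condition $N_p(c,A)<\infty$ cannot be relaxed.

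The principal obstacle is sharpness at intermediate $p$: the Minkowski bound in (i) is in general far from tight, and to saturate it one must exploit both the non-negativity of $c(k)$ and the commutativity of the dilations, using approximate eigenfunctions of dilation (powers of $|x|$ truncated to long annuli).  The logarithmic change of variables is what converts this sharpness question into the classical harmonic-analysis fact that translation-invariant positive operators on $L^p(\mathbb{R})$ attain their full total-mass norm; without this reduction, a direct attack on the lower bound in $\mathbb{R}^d$ would be considerably less transparent.
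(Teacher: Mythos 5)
Your proof is correct. Part (i) coincides with the paper's (a one-line appeal to Minkowski's integral inequality after the change of variables $y=A(k)x$), and your treatment of the endpoints $p=1,\infty$ in (ii) matches the paper's remark that these cases are obvious. For $1<p<\infty$ your route differs in packaging, though not in the underlying extremizers: the paper works with $d=1$, $c(k)\ge 0$, $A(k)=1/k$, pairs $\mathcal{H}_{c,A}f_t$ against the dual test function $g_t(x)=x^{-1/q}\chi_{(t,1/t)}(x)$ with $f_t(x)=x^{-1/p}\chi_{(t,1/t)}(x)$, computes the pairing explicitly, applies H\"{o}lder, and lets $t\to 0$ via monotone convergence to get $N_p(c,A)\le\|\mathcal{H}_{c,A}\|$. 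You instead restrict to radial functions, pass to logarithmic coordinates so that the operator becomes convolution with the positive measure $\mu=\sum_k c(k)a_k^{-d/p}\delta_{-\log a_k}$ on $L^p(\mathbb{R})$, and invoke the classical fact that a positive convolution operator on $L^p(\mathbb{R})$ has norm equal to the total mass of its measure. Note that your test functions $\chi_I$ for long intervals $I$ pull back precisely to the paper's $f_t$ (a power of $|x|$ on a long annulus), so the two arguments are the same computation seen from different angles. The paper's version is fully self-contained and elementary; yours is more conceptual, works directly in arbitrary dimension, and makes transparent that it is the positivity of the coefficients which forces saturation of the Minkowski bound --- but it delegates the sharpness of Young's inequality for positive measures to an external classical fact (which you correctly sketch how to prove). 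Either way the conclusion $\|\mathcal{H}_{c,A}\|\ge N_p(c,A)$ on the chosen family is established, so the proposal is sound.
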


\begin{proof}
(i) This readily follows from the Minkowskii inequality \cite{BM}.

(ii) Let $1< p< \infty$, $d=1$. Consider the case where $c(k)\ge 0$,  $c(k)=0$ for $k\le 0$, and $A(k)=1/k$ for $k\ge 1$. We proceed as in \cite[Theorem 2.1]{JJS}.

For each $t\in (0,1)$ put
\[
f_t(x)=\frac{1}{x^{1/p}}\chi_{(t,1/t)}(x),\   \  g_t(x)=\frac{1}{x^{1/q}}\chi_{(t,1/t)}(x)\ \ (1/p+1/q=1).
\]
Then
\[
\|f_t\|_p^p=\|g_t\|_q^q=2\log\frac{1}{t}.
\]
For every natural $k$ consider the function
\begin{eqnarray}\label{ht}
h_t(k)&=&\int_0^\infty g_t(kx)f_t(x)dx\\ \nonumber
&=&\int_0^\infty   \frac{1}{(kx)^{1/q}}\chi_{(t,1/t)}(kx)\frac{1}{x^{1/p}}\chi_{(t,1/t)}(x)dx\\ \nonumber
&=&k^{-1/q}\int_0^\infty  \chi_{(t/k,1/kt)}(x)\chi_{(t,1/t)}(x) \frac{dx}{x}.
\end{eqnarray}

If $k>1/t^2$, then  $(t/k,1/kt)\cap (t,1/t)=\varnothing$ and so $\chi_{(t/k,1/kt)}\chi_{(t,1/t)}=0$. In this case, $h_t(k)=0$.

It is easy tho compute then $h_t(1)=2\log\frac{1}{t}$.
Further, if $1<k\le 1/t^2$ we have $(t/k,1/kt)\cap (t,1/t)=(t,1/kt)$.  Therefore  \eqref{ht} implies 
\[
h_t(k)=k^{-1/q}\int_t^{1/kt}\frac{dx}{x}=k^{-1/q}\left(2\log\frac{1}{t}-\log k\right).
\]
Consider the non-negative quantity 
\begin{eqnarray*}
J_t&=&\int_0^\infty g_t(y)(\mathcal{H}_{c,A}f_t)(y)dy\\
&=&\sum_{k=1}^\infty c(k) \int_0^\infty g_t(y)f_t(\frac yk)dy\\
&=&\sum_{k=1}^\infty c(k)k \int_0^\infty g_t(kx)f_t(x)dx\\
&=&\sum_{k=1}^\infty c(k)kh_t(k)\\
&=&c(1)2\log\frac{1}{t}+\sum_{k=2}^\infty c(k)k^{1-1/q}\left(2\log\frac{1}{t}-\log k\right).
\end{eqnarray*}
If $\mathcal{H}_{c,A}$ is bounded in $L^p(\mathbb{R})$, we have by the Holder's inequality 
\[
J_t\le \|\mathcal{H}_{c,A}\|\|f_t\|_p\|g_t\|_q= \|\mathcal{H}_{c,A}\|2\log\frac{1}{t}.
\]
It follows, that
\[
c(1)+\sum_{k=2}^\infty c(k)k^{1/p}\left(1-\frac{\log k}{2\log\frac{1}{t}}\right)\le \|\mathcal{H}_{c,A}\|.
\]

Putting here $t=1/n$ ($n\in \mathbb{N}$), $n \to \infty$ we get by the B. Levi's theorem, that
\[
N_p(c,A)=\sum_{k=1}^\infty c(k)k^{1/p}\le \|\mathcal{H}_{c,A}\|,
\]
which completes the proof in the case  $1< p< \infty$. For the cases $p=1$ and $p=\infty$ the statement (ii) is obvious. 
\end{proof}

 Let $U_j\ (j = 1; \dots ; 2^d)$ be some  fixed enumeration of the family of all open hyperoctants in $\mathbb{R}^d.$  For every
pair $(i; j)$ of indices there is a unique $\varepsilon(i; j)\in\{-1,1\}^d$ such that $$
\varepsilon(i; j)U_i :=
\{(\varepsilon(i; j)_1 x_1;\dots ; \varepsilon(i; j)_d x_d) : x\in U_i\} = U_j.
$$
 It is clear that $\varepsilon(i; j)U_j = U_i$ and
$\varepsilon(i; j)U_l \cap U_i=\varnothing$ as $l \ne j.$
We will assume that $A(k)$ form a commuting family. Then
as is well known there are an orthogonal $d\times d$-matrix $C$ and a family of
diagonal non-singular real matrices $A'(k) = \mathrm{diag}(a_1(k); \dots ; a_d(k))$ such that
$A'(k) = C^{-1}A(k)C$ for $k\in \mathbb{Z}.$ Then 
$$
a(k) := (a_1(k); \dots ; a_d(k))
$$ 
is the family
of all eigenvalues (with every eigenvalue repeated according to its
multiplicity) of the matrix $A(k)$. We put
$$
\Omega_{ij} := \{k\in \mathbb{Z}: (\mathrm{sgn}(a_1(k)); \dots ; \mathrm{sgn}(a_d(k))) = \varepsilon(i; j)\}.
 $$

If $N_2(c,A)<\infty$, i.~e., if $(|\det A(k)|^{-1/2}c(k))_{k\in \mathbb{Z}}\in \ell^1(\mathbb{Z})$, we put
 $$
   \varphi_{ij}(s):=\sum_{k\in \Omega_{ij}}c(k)|a(k)|^{-1/2- \imath s}.
   $$
Above we assume that $|a(k)|^{-1/2-\imath s}:=$ $\prod_{l=1}^d |a_l(k)|^{-1/2-\imath s_l}$ where we put $|a_l(k)|^{-1/2-\imath s_l}
:=\exp((-1/2-\imath s_l)\log |a_l(k)|)$.
 It follows that
\begin{eqnarray*}
\varphi_{ij}(s)
=\sum_{k\in \Omega_{ij}}\frac{c(k)}{\sqrt{|\det A(k)|}}e^{-\imath s\cdot \log |a(k)|}
\end{eqnarray*}
(here $\log |a(k)|:=(\log |a_1(k)|,\dots,\log |a_d(k)|)$;  the dot denotes the inner product in $\mathbb{R}^d$).

Evidently,  $\varphi_{ij}=\varphi_{ji}$ and all functions $\varphi_{ij}$ belong to the algebra $C_b(\mathbb{R}^d)$ of bounded and
continuous functions on $\mathbb{R}^d$ if $N_2(c,A)<\infty$.

Note also that the operator $\mathcal{H}_{c,A}$ is normal if  the matrices  $A(k)$ form a commuting family \cite{Forum}.

\begin{definition} Let the matrices $A(k)$ form a commuting family and $N_2(c,A)<\infty$. We define the  \textit{matrix symbol}
of a Hausdorff  operator $\mathcal{H}_{c, A}$  by
$$
\Phi = \left(\varphi_{ij}\right)_{i,j=1}^{2^d}
$$
\end{definition}
Then $\Phi$ is a symmetric element of the matrix algebra $\mathrm{Mat}_{2^d}(C_b(\mathbb{R}^n)).$

The symbol was first introduced in \cite{faa} for the case of positive definite $A(k)$.

It is known \cite[Theorem 1]{Forum} that
\begin{eqnarray}\label{sigma}
\sigma(\mathcal{H}_{c, A})=\{\lambda \in \mathbb{C}:\inf_{s\in \mathbb{R}^d}|\det(\lambda  I_{2^d}-\Phi(s)|=0\},
\end{eqnarray}
 and by  \cite[Corollary 3]{Forum}
$$
\|\mathcal{H}_{K, A}\|=\max\{|\lambda|:\inf_{s\in \mathbb{R}^d}|\det(\lambda  I_{2^d}-\Phi(s)|=0\}=\sup_{s\in \mathbb{R}^d}\|\Phi(s)\|,
$$
where $\|\Phi(s)\|$ stands for the  norm of the operator in $\mathbb{C}^{2^d}$ of multiplication by the matrix $\Phi(s)$ and $ I_{2^d}$ denotes the unit matrix of order $2^d$.

\section{Main results}

Recall that  real numbers $r_1,\dots, r_m$ are called {\it linear independent over $\mathbb{Z}$} if the
equality $\sum_{k=1}^m l_k r_k=0$ where all $l_k\in \mathbb{Z}$ yields $l_k=0$ for all $k$. As  usual we say that an infinite family of real numbers  is linear independent 
over $\mathbb{Z}$ if
each its finite subfamily   is linear independent over $\mathbb{Z}$.

We shall say that a set  of complex numbers is  {\it rotationally invariant} if it is invariant  under all rotations around  the origin.

\begin{theorem}\label{th:1} Let $A(k)\in \mathrm{GL}(d,\mathbb{R})$  ($k\in \mathbb{Z} $) be a commuting family of self-adjoint matrices, and $N_2(c,A)<\infty$.

(i) Let $a(k) = (a_1(k); \dots ; a_d(k))$ be the family
of all eigenvalues (with their multiplicities) of the matrix $A(k)$. If for some $\nu$ the numbers $\log|a_{\nu}(k)|$ ($k\in \mathbb{Z}$) are linear independent over $\mathbb{Z}$, 
the spectrum $\sigma(\mathcal{H}_{c, A})$ of a Hausdorff  operator $\mathcal{H}_{c, A}$ in $L^2(\mathbb{R}^d)$ is rotationally invariant.


(ii) Let the symbol $\Phi$ of $\mathcal{H}_{c,A}$ be a real analytic matrix function on $\mathbb{R}^d$. Then  $\lambda\in \sigma_p(\mathcal{H}_{c,A})$  if and only if all  matrices $\Phi(s)$ ($s\in \mathbb{R}^d$)  have a common eigenvalue $\lambda$.
\end{theorem}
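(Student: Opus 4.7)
\emph{For part (i),} the plan is to combine the spectrum formula \eqref{sigma} with Kronecker's simultaneous approximation theorem. Fix $\theta\in\mathbb{R}$, $s_0\in\mathbb{R}^d$, and $\varepsilon>0$. The key observation is that, entry by entry,
$$
\varphi_{ij}(s_0+te_\nu)=\sum_{k\in\Omega_{ij}}\frac{c(k)}{\sqrt{|\det A(k)|}}\,e^{-\imath s_0\cdot\log|a(k)|}\,e^{-\imath t\log|a_\nu(k)|},
$$
where $e_\nu$ is the $\nu$-th standard basis vector of $\mathbb{R}^d$. Hence choosing $t$ so that $e^{-\imath t\log|a_\nu(k)|}\approx e^{-\imath\theta}$ for every relevant $k$ forces $\Phi(s_0+te_\nu)\approx e^{-\imath\theta}\Phi(s_0)$. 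The hypothesis $N_2(c,A)<\infty$ lets me truncate each $\varphi_{ij}$ to a finite set $F\subset\mathbb{Z}$ with arbitrarily small tail, and the linear independence over $\mathbb{Z}$ of $\{\log|a_\nu(k)|\}_{k\in F}$ then allows me to apply Kronecker's theorem to pick $t$ with $|e^{-\imath t\log|a_\nu(k)|}-e^{-\imath\theta}|$ uniformly small on $F$. Combining these yields $\|\Phi(s_0+te_\nu)-e^{-\imath\theta}\Phi(s_0)\|<\varepsilon$. Since the matrix entries of $\Phi$ are uniformly bounded, this forces $|\det(e^{-\imath\theta}\mu\,I_{2^d}-\Phi(s_0+te_\nu))|$ to be small whenever $\mu\in\sigma(\Phi(s_0))$, so by \eqref{sigma} the point $e^{-\imath\theta}\mu$ lies in $\sigma(\mathcal{H}_{c,A})$. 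Letting $s_0$ and $\mu\in\sigma(\Phi(s_0))$ vary and using that $\sigma(\mathcal{H}_{c,A})$ is closed, this gives $e^{-\imath\theta}\sigma(\mathcal{H}_{c,A})\subseteq\sigma(\mathcal{H}_{c,A})$, and rotational invariance follows.

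\emph{For part (ii),} I would invoke the unitary equivalence, recalled in the introduction, of $\mathcal{H}_{c,A}$ with multiplication by the matrix-valued function $\Phi$ on $L^2(\mathbb{R}^d;\mathbb{C}^{2^d})$. For such a multiplication operator, $\lambda$ lies in the point spectrum iff the set $E_\lambda:=\{s\in\mathbb{R}^d:\det(\lambda I_{2^d}-\Phi(s))=0\}$ has positive Lebesgue measure: necessity is clear since the support of any eigenfunction must lie in $E_\lambda$, while sufficiency follows from a measurable selection of eigenvectors of $\Phi(s)$ on a bounded positive-measure subset of $E_\lambda$, extended by zero. Under the real analyticity hypothesis, $p_\lambda(s):=\det(\lambda I_{2^d}-\Phi(s))$ is a real analytic function on the connected set $\mathbb{R}^d$, so by the identity principle its zero set is either of Lebesgue measure zero or all of $\mathbb{R}^d$. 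Hence $|E_\lambda|>0$ is equivalent to $p_\lambda\equiv 0$, which is precisely the statement that $\lambda$ is an eigenvalue of every $\Phi(s)$.

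\emph{Main obstacles.} In (i) the critical step is coordinating the series truncation afforded by $N_2(c,A)<\infty$ with Kronecker's theorem, which can only handle finitely many frequencies at a time; the linear independence hypothesis enters precisely at this finite level, and the uniform boundedness of the matrix entries of $\Phi$ is what lets the determinantal control survive the passage to the limit. In (ii) the delicate points are the measurable selection of eigenvectors of $\Phi(s)$ (standard but not automatic) and the application of the identity theorem for real analytic functions of several variables, which asserts that a non-identically-zero real analytic function on $\mathbb{R}^d$ has zero set of Lebesgue measure zero.
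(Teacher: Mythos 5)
Your part (ii) is essentially the paper's own argument: the paper simply cites the characterization $\lambda\in\sigma_p(\mathcal{H}_{c,A})\iff \mathrm{mes}\,E(\lambda)>0$ from an earlier work rather than re-deriving it via measurable selection, and then runs exactly your identity-principle argument for real analytic functions. Part (i), however, takes a genuinely different route. The paper truncates the operator to $\mathcal{H}_{c,A}^{(n)}$, uses Kronecker to show that $s\mapsto(e^{-\imath s\cdot\log|a(k)|})_{|k|\le n}$ has dense range in $\mathbb{T}^{2n+1}$, observes that rotating the whole torus variable by $\overline{\zeta}$ multiplies the truncated symbol by $\overline{\zeta}$ (hence $\sigma(\mathcal{H}_{c,A}^{(n)})$ is rotationally invariant), and then passes to the limit using $\|R_n\|\to 0$, Kato's bound $\mathrm{dist}_H(\sigma(\mathcal{H}_{c,A}),\sigma(\mathcal{H}_{c,A}^{(n)}))\le\|R_n\|$ for commuting perturbations, and the fact that $\mathrm{dist}_H$ is a rotation-invariant metric on compacta. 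You instead keep the full symbol, perturb only in the $\nu$-th coordinate direction, and use Kronecker to make all the factors $e^{-\imath t\log|a_\nu(k)|}$ on a finite truncation set simultaneously close to $e^{-\imath\theta}$; the tail is controlled by $N_2(c,A)<\infty$. This avoids both the Kato perturbation theorem and the Hausdorff-metric limit argument entirely, at the cost of an $\varepsilon$-bookkeeping step, and is arguably more elementary. One step you leave implicit and should state: to pass from ``$e^{-\imath\theta}\mu\in\sigma(\mathcal{H}_{c,A})$ for every $\mu\in\bigcup_{s}\sigma(\Phi(s))$'' to ``$e^{-\imath\theta}\sigma(\mathcal{H}_{c,A})\subseteq\sigma(\mathcal{H}_{c,A})$'' you need $\sigma(\mathcal{H}_{c,A})=\mathrm{cl}\bigl(\bigcup_{s}\sigma(\Phi(s))\bigr)$. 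This does follow from the spectrum formula, since $|\det(\lambda I_{2^d}-\Phi(s))|=\prod_j|\lambda-\mu_j(s)|\ge\bigl(\mathrm{dist}(\lambda,\sigma(\Phi(s)))\bigr)^{2^d}$, so $\inf_s|\det(\lambda I_{2^d}-\Phi(s))|=0$ forces $\lambda$ to be approximated by eigenvalues of the $\Phi(s)$; but a sentence to this effect is needed to close the argument.
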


\begin{proof}
(i)
Consider the truncated  operator
\[
\mathcal{H}_{c,A}^{(n)}f(x):=\sum_{k=-n}^n c(k) f(A(k) x)\  \  (n\in \mathbb{N})
\]
in $L^2(\mathbb{R}^d)$. Its matrix symbol is
$$
\Phi^{(n)} = \left(\varphi_{ij}^{(n)}\right)_{i,j=1}^{2^d},
$$
where
\begin{eqnarray}\label{phi_ij}
\varphi_{ij}^{(n)}(s)&=&\sum_{k\in \Omega_{ij},\atop |k|\le n}c(k)|a(k)|^{-1/2- \imath s}\\\nonumber
&=&\sum_{k\in \Omega_{ij},\atop |k|\le n}\frac{c(k)}{\sqrt{|\det A(k)|}}e^{-\imath s\cdot \log |a(k)|}.
\end{eqnarray}

Let, for definiteness, the numbers $\log|a_{1}(k)|$ ($k\in \mathbb{Z}$) are linear independent over $\mathbb{Z}$.
Then the corollary of Kronecker's approximation theorem (see, e.g., 
\cite{LZh})  implies that the set
\[
\{(e^{-\imath s_1\log |a_1(-n)|},\dots,e^{-\imath s_1\log |a_1(n)|}): s_1\in \mathbb{R}\}
\]
is dense in the $(2n+1)$-dimensional torus $\mathbb{T}^{2n+1}$ for each $n\in \mathbb{N}$. Thus
the set
\begin{equation*}
\{\Lambda(s):=(e^{-\imath s\cdot \log |a(-n)|},\dots,e^{-\imath s\cdot \log |a(n)|}): s\in \mathbb{R}^d\}
\end{equation*}
is dense in $\mathbb{T}^{2n+1}$ for each $n\in \mathbb{N}$, as well.

From \cite[Theorem 1]{Forum} it follows that
\begin{eqnarray}\label{sigma}
\sigma(\mathcal{H}_{c, A}^{(n)})=\{\lambda \in \mathbb{C}:\inf_{s\in \mathbb{R}^d}|\det(\lambda I_{2^d}-\Phi^{(n)}(s)|=0\}.
\end{eqnarray}

For each pare $i,j=1,\dots,2^d$ consider the functions
$$
\xi_{ij}^{(n)}(s):=\sum_{k\in \Omega_{ij},\atop |k|\le n}\frac{c(k)}{\sqrt{|\det A(k)|}}t_k\ \ \ (t=(t_k)\in \mathbb{T}^{2n+1}).
$$
This functions are continuous on $\mathbb{T}^{2n+1}$ and $\varphi_{ij}^{(n)}(s)=\xi_{ij}^{(n)}\circ\Lambda(s)$ for $i,j=1,\dots,2^d$. As was mentioned above the range
 of $\Lambda$ is dense in $\mathbb{T}^{2n+1}$. 
 Then by continuity we get
 \begin{eqnarray}\label{T}
\inf_{s\in \mathbb{R}^d}|\det(\lambda I_{2^d}-\Phi^{(n)}(s)|&=&\inf_{s\in \mathbb{R}^d}|\det(\lambda I_{2^d}-(\xi_{ij}^{(n)}\circ\Lambda(s))_{i,j=1}^{2^d}| \nonumber\\
&=&\inf_{t\in \mathbb{T}^{2n+1}}|\det(\lambda I_{2^d}-(\xi_{ij}^{(n)}(t))_{i,j=1}^{2^d}|.
\end{eqnarray}

Further, for every $\zeta\in \mathbb{C}$, $|\zeta|=1$ we have
 \begin{eqnarray*}
|\det(\zeta\lambda I_{2^d}-\Phi^{(n)}(s)|&=&|\det(\lambda I_{2^d}-\overline{\zeta}(\xi_{ij}^{(n)}(t))_{i,j=1}^{2^d}|\\
&=&|\det(\lambda I_{2^d}-(\xi_{ij}^{(n)}(\overline{\zeta} t))_{i,j=1}^{2^d}|.
 \end{eqnarray*}
The last equation, (\ref{sigma}), and (\ref{T}) imply that $\zeta\sigma(\mathcal{H}_{c, A}^{(n)})=\sigma(\mathcal{H}_{c, A}^{(n)})$. Thus, the set $\sigma(\mathcal{H}_{c, A}^{(n)})$
is rotationally invariant.

Let $R_n:=\mathcal{H}_{c, A}-\mathcal{H}_{c, A}^{(n)}$. Since $N_2(c,A)<\infty$, we have
$$
\|R_n\|\le \sum_{|k|> n}\frac{|c(k)|}{\sqrt{|\det A(k)|}}\to 0\ \   \mbox{ as } n\to \infty .
 $$
Since matrices $A(k)$ form a commuting family, $R_n$ commutes with $\mathcal{H}_{c, A}^{(n)}$ (see the proof of Theorem 1 in \cite{Forum}). Now by \cite[Theorem IV.3.6]{Kato} we have
 \begin{eqnarray}\label{distRn}
\mathrm{dist}_H(\sigma(\mathcal{H}_{c, A}),\sigma(\mathcal{H}_{c, A}^{(n)}))\le \|R_n\|,
 \end{eqnarray}
where $\mathrm{dist}_H(X,Y)$ denotes the Hausdorff distance between compact  sets $X, Y\subset \mathbb{C}$.
Recall that
$$
\mathrm{dist}_H(X,Y)=\inf\{\varepsilon:X\subseteq Y_\varepsilon \mbox { and } Y \subseteq X_\varepsilon\},
$$
where $X_\varepsilon :=\cup_{x\in X}\{z\in \mathbb{C}: |z-x|\le \varepsilon\}$. It is easy to verify that $\mathrm{dist}_H$ is invariant with respect to  rotations:
$\mathrm{dist}_H(\zeta X, \zeta Y)=\mathrm{dist}_H(X,  Y)$ for all $\zeta\in \mathbb{C}$, $|\zeta|=1$. Then
\begin{eqnarray}\label{zeta}
\mathrm{dist}_H(\zeta \sigma(\mathcal{H}_{c, A}), \sigma(\mathcal{H}_{c, A}^{(n)}))&=&\mathrm{dist}_H(\sigma(\mathcal{H}_{c, A}),\overline{\zeta}\sigma(\mathcal{H}_{c, A}^{(n)}))\\\nonumber
&=&\mathrm{dist}_H(\sigma(\mathcal{H}_{c, A}),\sigma(\mathcal{H}_{c, A}^{(n)}))\le \|R_n\|\to 0,
\end{eqnarray}
as $n\to\infty$. Thus, both  $\zeta \sigma(\mathcal{H}_{c, A})$ and $\sigma(\mathcal{H}_{c, A})$ are limits of $\sigma(\mathcal{H}_{c, A}^{(n)}))$ with respect to $\mathrm{dist}_H$.
   It is known that $\mathrm{dist}_H$ is a metric on compact  subsets (see, e.g., \cite[\S 21, VII]{Kuratowski}). Thus, (\ref{zeta}) shows that
    $\zeta \sigma(\mathcal{H}_{c, A})= \sigma(\mathcal{H}_{c, A})$ and
 (i) follows.

(ii) In \cite[Theorem 1]{Forum} it is proven that  the point spectrum $\sigma_p(\mathcal{H}_{c, A})$  consists of such complex numbers $\lambda$ for which the closed set 
$$E(\lambda):=\{s\in \mathbb{R}^d: \det(\lambda-\Phi(s))=0\}
$$
 is of  positive Lebesgue measure.
 
 Thus, if $\lambda\in \cap_{s\in \mathbb{R}^d}\sigma(\Phi(s))$, then  $E(\lambda)=\mathbb{R}^d$  and therefore $\lambda\in\sigma_p(\mathcal{H}_{c, A})$.

Conversely, if $\lambda$ is an eigenvalue of $\mathcal{H}_{c, A}$,  then $\mathrm{mes}(E(\lambda))>0$.
Since each entry $\varphi_{ij}$ of $\Phi$ is a real analytic function on $\mathbb{R}^d$, the function $s\mapsto \det(\lambda-\Phi(s))$ is real analytic, as well. As far as this function vanishes on the set $E(\lambda)$ of positive measure,  it is identically zero
by a version of a uniqueness theorem for real analytic functions \cite{VI} (see also \cite[p. 83]{KP}) and therefore $\lambda$ is a common eigenvalue of $\Phi(s)$ for all $s\in \mathbb{R}^d$.

\end{proof}

\begin{remark}
The arithmetic condition in  Theorem \ref{th:1} (i) is essential. Indeed, consider the operator $\mathcal{H}f(x)=f(x)+f(2x)$ in $L^2(\mathbb{R})$. 
Its scalar symbol (see (\ref{sym}) below) is $\varphi(s)=1+2^{-1/2}e^{-\imath s\log 2}$. Then by \cite{faa} or \cite[Corollary 7]{Forum} we have $\sigma(\mathcal{H})=\mathrm{cl}(\varphi(\mathbb{R}))=\{|z-1|=2^{-1/2}\}$ (here and below $\mathrm{cl}$ stands for the closure).
\end{remark}

\begin{remark}
 Since  $\mathcal{H}_{c,A}$ is normal, the residual spectrum of $\mathcal{H}_{c,A}$ is empty.
\end{remark}

\begin{corollary}  Let $X$ be a closed $\mathcal{H}_{c,A}$-invariant subspace of $L^2(\mathbb{R}^d)$. If the assumptions  of   Theorem \ref{th:1} (i) are true and $\mathcal{H}_{c,A}$ is a minimal normal extension of the restriction $S:=\mathcal{H}_{c,A}|X$, then the spectrum $\sigma(S)$ is rotationally invariant.
\end{corollary}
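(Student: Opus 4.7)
The plan is to reduce the claim to Theorem~\ref{th:1}(i) via the classical relationship between the spectrum of a subnormal operator and that of its minimal normal extension.

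First, under the hypotheses of Theorem~\ref{th:1}(i) the operator $N:=\mathcal{H}_{c,A}$ is normal (the matrices $A(k)$ commute, as recalled in Section~2), so Theorem~\ref{th:1}(i) yields that $\sigma(N)$ is rotationally invariant. Since $X$ is closed and $N$-invariant, and $N$ is, by hypothesis, a minimal normal extension of $S=N|X$, the operator $S$ is subnormal with minimal normal extension $N$.

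Next, I would invoke the classical spectral-inclusion theorem for subnormal operators (see, e.g., J.~B.~Conway, \emph{The Theory of Subnormal Operators}, Proposition~II.2.11): if $S$ is subnormal with minimal normal extension $N$, then
\[
\sigma(N)\subseteq\sigma(S),\qquad \sigma(S)\setminus\sigma(N)\ \text{is a union of bounded components of}\ \mathbb{C}\setminus\sigma(N).
\]
The key geometric observation is that any closed rotationally invariant set $K\subset\mathbb{C}$ has the form $K=\{z\in\mathbb{C}:|z|\in E\}$ for some closed $E\subseteq[0,\infty)$, so
\[
\mathbb{C}\setminus K=\{z\in\mathbb{C}:|z|\in[0,\infty)\setminus E\}
\]
decomposes into connected components, each of which is either an open disc $\{|z|<r\}$ or an open annulus $\{r_1<|z|<r_2\}$ centered at the origin. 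Applied to $K=\sigma(N)$, this shows that every bounded component of $\mathbb{C}\setminus\sigma(N)$ is itself rotationally invariant, and hence so is any union of such components.

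Combining the two ingredients, $\sigma(S)$ is the union of the rotationally invariant set $\sigma(N)$ with a rotationally invariant subset of $\mathbb{C}\setminus\sigma(N)$, and is therefore rotationally invariant. The only non-elementary ingredient is the cited spectral-inclusion theorem for minimal normal extensions; once that is in hand the proof reduces to the elementary structural description of closed rotationally invariant subsets of $\mathbb{C}$, so I do not expect a serious obstacle.
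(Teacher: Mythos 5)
Your proposal is correct and follows essentially the same route as the paper: both invoke Conway's Theorem~II.2.11 on the spectrum of a subnormal operator versus its minimal normal extension, and then observe that every bounded hole of the rotationally invariant set $\sigma(\mathcal{H}_{c,A})$ is itself rotationally invariant. Your extra remark describing closed rotationally invariant sets as $\{z:|z|\in E\}$ just makes explicit the step the paper states without elaboration.
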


\begin{proof} In is known (see, e.~g., \cite[Theorem II.2.11]{Conway}) that $\sigma(\mathcal{H}_{c,A})\subseteq \sigma(S)$, and if $\sigma(\mathcal{H}_{c,A})\ne \sigma(S)$
then $\sigma(S)$ is a union of $\sigma(\mathcal{H}_{c,A})$ and some bounded holes 
of $\sigma(\mathcal{H}_{c,A})$ (i.~e. bounded components of $\mathbb{C}\setminus\sigma(\mathcal{H}_{c,A})$). Since the set $\sigma(\mathcal{H}_{c,A})$ is rotationally invariant, each its hole is rotationally invariant, too. This completes the proof.
\end{proof}

\begin{corollary}  Let the assumptions  of  Theorem \ref{th:1} (i) hold. If the operator $\mathcal{H}_{c,A}$ is non-null, then it is not self-adjoint. 
\end{corollary}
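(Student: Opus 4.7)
The plan is to argue by contradiction: assume $\mathcal{H}_{c,A}$ is self-adjoint, and use the rotational invariance from Theorem \ref{th:1}(i) together with normality to conclude that the operator must be zero.

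First I would observe that if $\mathcal{H}_{c,A}$ is self-adjoint, then its spectrum is contained in $\mathbb{R}$. By Theorem \ref{th:1}(i) the spectrum is rotationally invariant, so for any $\lambda \in \sigma(\mathcal{H}_{c,A})$ and every $\zeta \in \mathbb{C}$ with $|\zeta|=1$ we have $\zeta\lambda \in \sigma(\mathcal{H}_{c,A}) \subseteq \mathbb{R}$. Choosing $\zeta = \imath$ forces $\imath\lambda \in \mathbb{R}$, which together with $\lambda \in \mathbb{R}$ yields $\lambda = 0$. Hence $\sigma(\mathcal{H}_{c,A}) \subseteq \{0\}$, and in fact $\sigma(\mathcal{H}_{c,A}) = \{0\}$ since the spectrum of a bounded operator is nonempty.

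Next I would invoke normality. As noted in Section 2, the hypothesis that $A(k)$ form a commuting family implies $\mathcal{H}_{c,A}$ is normal. For any bounded normal operator the operator norm equals the spectral radius, so
\[
\|\mathcal{H}_{c,A}\| = \sup\{|\lambda| : \lambda \in \sigma(\mathcal{H}_{c,A})\} = 0,
\]
and therefore $\mathcal{H}_{c,A} = 0$. This contradicts the hypothesis that $\mathcal{H}_{c,A}$ is non-null, finishing the proof.

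I do not anticipate any serious obstacle: the argument is a short two-line deduction once Theorem \ref{th:1}(i) and the normality of $\mathcal{H}_{c,A}$ (together with the spectral-radius formula for normal operators) are in hand. The only minor point worth mentioning is to justify that the spectrum is nonempty, which is automatic for bounded operators on a Banach space; alternatively one can skip this and use $\|\mathcal{H}_{c,A}\| = \sup_s \|\Phi(s)\|$ from Section 2 combined with the spectral-radius formula to reach the same conclusion.
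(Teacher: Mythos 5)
Your proof is correct and uses exactly the same two ingredients as the paper's own argument — the rotational invariance of the spectrum from Theorem \ref{th:1}(i) and the equality of norm and spectral radius for normal operators — merely arranged as a proof by contradiction rather than the paper's direct contrapositive. No issues.
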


\begin{proof} Indeed, since  $\mathcal{H}_{c,A}$ is normal, its spectral radius is  $\|\mathcal{H}_{c,A}\|\ne 0$. Thus, the spectrum $\sigma(\mathcal{H}_{c,A})$ cannot be a subset of reals.
\end{proof}

Recall that the {\it essential Weyl spectrum} $\sigma_{ew}(T)$ of an (closed densely defined) operator $T$ in a complex Banach space $X$ can be defined as 
$$
\sigma_{ew}(T)=\mathbb{C}\setminus \Delta_4(T),
$$
where
$$
\Delta_4(T)=\{\lambda\in\mathbb{C}: T-\lambda I \mbox{ is Fredholm and } \mathrm{ind}(T-\lambda I)=0\}.
$$
It is known that
$$
\sigma_{ew}(T)=\cap_{K\in \mathcal{K}(X)}\sigma(T+K),
$$
where $\mathcal{K}(X)$ stands for the space of compact operators in $X$ (see, e.g., \cite{Gus}, \cite{Berb}, or \cite[Theorem IX.1.4]{EE}.\footnote{In  \cite{EE} the  Weyl spectrum is denoted by  $\sigma_{e4}(T)$.})

In the following $\pi_{00}(T)$ stands for the set of  isolated points of the spectrum $\sigma(T)$
of an operator $T$  that are eigenvalues of finite geometric  multiplicity. 

\begin{corollary}\label{Weyl}   Let $\mathcal{H}_{c,A}\ne O$ and the assumptions  of  Theorem \ref{th:1} (i) hold.
Then  $\sigma_{ew}(\mathcal{H}_{c,A})=\sigma(\mathcal{H}_{c,A})$ if  $0\notin \pi_{00}(\mathcal{H}_{c,A})$, and  $\sigma_{ew}(\mathcal{H}_{c,A})=\sigma(\mathcal{H}_{c,A})\setminus\{0\}$ otherwise. In particular,  $\sigma_{ew}(\mathcal{H}_{c,A})$ is rotationally invariant.
\end{corollary}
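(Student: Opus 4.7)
The plan is to combine two ingredients: a classical form of Weyl's theorem valid for normal operators, and the rotational invariance of $\sigma(\mathcal{H}_{c,A})$ established in Theorem \ref{th:1}(i).

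First, I would recall that every normal operator $T$ on a Hilbert space satisfies Weyl's theorem, i.e.\
\[
\sigma(T)\setminus\sigma_{ew}(T)=\pi_{00}(T).
\]
This is a classical result (see, for instance, the standard references on Weyl's theorem for normal operators). Since under the hypotheses the matrices $A(k)$ form a commuting family and $\mathcal{H}_{c,A}$ is therefore normal, applying this identity gives
\[
\sigma_{ew}(\mathcal{H}_{c,A})=\sigma(\mathcal{H}_{c,A})\setminus\pi_{00}(\mathcal{H}_{c,A}).
\]

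Second, I would analyse $\pi_{00}(\mathcal{H}_{c,A})$ using the geometry forced by Theorem \ref{th:1}(i). The spectrum $\sigma(\mathcal{H}_{c,A})$ is rotationally invariant around the origin, so every point $\lambda\ne 0$ in it belongs to the full circle $\{\mu\in\mathbb{C}:|\mu|=|\lambda|\}\subseteq\sigma(\mathcal{H}_{c,A})$, which is an uncountable connected subset of the spectrum and in particular contains no isolated point. Hence the only possible isolated point of $\sigma(\mathcal{H}_{c,A})$ is $0$, giving $\pi_{00}(\mathcal{H}_{c,A})\subseteq\{0\}$.

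From here the two advertised cases fall out immediately: if $0\notin\pi_{00}(\mathcal{H}_{c,A})$ then $\pi_{00}(\mathcal{H}_{c,A})=\varnothing$ and $\sigma_{ew}(\mathcal{H}_{c,A})=\sigma(\mathcal{H}_{c,A})$, while otherwise $\pi_{00}(\mathcal{H}_{c,A})=\{0\}$ and $\sigma_{ew}(\mathcal{H}_{c,A})=\sigma(\mathcal{H}_{c,A})\setminus\{0\}$. Since removing the origin from a rotationally invariant set preserves rotational invariance, the final assertion follows as well. The main obstacle is essentially a bookkeeping one, namely invoking the right version of Weyl's theorem and providing a reference that covers the normal case; everything else is forced by Theorem \ref{th:1}(i).
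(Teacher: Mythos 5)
Your proof is correct and follows essentially the same route as the paper: both invoke Weyl's theorem for the normal operator $\mathcal{H}_{c,A}$ to get $\sigma_{ew}(\mathcal{H}_{c,A})=\sigma(\mathcal{H}_{c,A})\setminus\pi_{00}(\mathcal{H}_{c,A})$, and both deduce $\pi_{00}(\mathcal{H}_{c,A})\subseteq\{0\}$ from the rotational invariance in Theorem \ref{th:1}(i). You merely spell out the latter inclusion (a nonzero spectral point lies on a full circle contained in the spectrum, hence cannot be isolated), which the paper leaves implicit.
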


\begin{proof} Theorem \ref{th:1} (i) implies that  $\pi_{00}(\mathcal{H}_{c,A})\subseteq \{0\}$. Therefore, the first assertion of the corollary follows from the Weyl theorem, which states that 
\begin{equation}\label{p00}
\sigma_{ew}(\mathcal{H}_{c,A})=\sigma(\mathcal{H}_{c,A})\setminus \pi_{00}(\mathcal{H}_{c,A})
\end{equation}
 (see, e.g., \cite{Gus}, \cite{Berb}). Now the last statement follows from the Theorem \ref{th:1} (i), as well.
\end{proof}

For the next corollaries consider  the \textit{scalar symbol} of a  Hausdorff operator $\mathcal{H}_{c,A}$
\begin{equation}\label{sym}
 \varphi(s):=\sum_{k=-\infty}^\infty c(k)|\det A(k)|^{-1/2}e^{-\imath s\cdot \log |a(k)|}.
\end{equation}

\begin{corollary}\label{Pos} (cf.  \cite{JMS})  Let in addition to the assumptions  of Theorem \ref{th:1} (i) all matrices $(A(k))_{k\in \mathbb{Z}}$ are positive definite and $\mathcal{H}_{c,A}\ne O$. Then the spectrum $\sigma(\mathcal{H}_{c,A})$   is
  an annulus (or a disc) of the form 
  \begin{equation}\label{sp:pos}
 \left\{\zeta\in \mathbb{C}: \inf_{\mathbb{R}^d}|\varphi|\le|\zeta|\le \sup_{\mathbb{R}^d}|\varphi|\right\}
  \end{equation}
and $\|\mathcal{H}_{c,A})\|=\sup_{\mathbb{R}^d}|\varphi|$.  Moreover, $\sigma_{ew}(\mathcal{H}_{c,A})=\sigma(\mathcal{H}_{c,A})$. In particular, $\sigma(\mathcal{H}_{c,A})$ is invariant under compact perturbations of $\mathcal{H}_{c,A}$.
\end{corollary}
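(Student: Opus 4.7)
The plan is to reduce everything to the scalar symbol. Under positive definiteness, every eigenvalue satisfies $a_l(k)>0$, so the sign vector $(\mathrm{sgn}(a_1(k)),\dots,\mathrm{sgn}(a_d(k)))$ is identically $(+1,\dots,+1)$. Since $\varepsilon(i;j)=(+1,\dots,+1)$ iff $U_i=U_j$, i.e.\ iff $i=j$, the sets $\Omega_{ij}$ collapse to $\mathbb{Z}$ on the diagonal and $\emptyset$ off the diagonal. Hence $\Phi(s)=\varphi(s)\,I_{2^d}$ with $\varphi$ the scalar symbol defined in (\ref{sym}). Plugging this into the spectral identity (\ref{sigma}) yields
\[
\sigma(\mathcal{H}_{c,A})=\{\lambda\in\mathbb{C}:\inf_{s\in\mathbb{R}^d}|\lambda-\varphi(s)|^{2^d}=0\}=\mathrm{cl}(\varphi(\mathbb{R}^d)).
\]

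Next I would extract the annular shape. Theorem \ref{th:1}(i) applies (since the hypotheses of Corollary \ref{Pos} are a strengthening), so $\sigma(\mathcal{H}_{c,A})$ is rotationally invariant; thus $\sigma(\mathcal{H}_{c,A})=\{\zeta\in\mathbb{C}:|\zeta|\in R\}$ for $R:=|\mathrm{cl}(\varphi(\mathbb{R}^d))|$. Boundedness of $\varphi$ (guaranteed by $N_2(c,A)<\infty$) makes $\mathrm{cl}(\varphi(\mathbb{R}^d))$ compact, so the continuous map $|\cdot|$ turns it into a compact set satisfying $R=\mathrm{cl}(|\varphi|(\mathbb{R}^d))$. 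But $|\varphi|(\mathbb{R}^d)$ is a continuous image of the connected set $\mathbb{R}^d$, hence an interval, whose closure is $[\inf_{\mathbb{R}^d}|\varphi|,\sup_{\mathbb{R}^d}|\varphi|]$. This gives the annular form (\ref{sp:pos}). Since $\mathcal{H}_{c,A}$ is normal, the norm equals the spectral radius, which by the description above is $\sup_{\mathbb{R}^d}|\varphi|$; alternatively one can invoke the formula $\|\mathcal{H}_{c,A}\|=\sup_s\|\Phi(s)\|$ from \cite{Forum}, which for $\Phi(s)=\varphi(s)I_{2^d}$ yields the same value.

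Finally, for the essential Weyl spectrum I would eliminate the exceptional case in Corollary \ref{Weyl}. The assumption $\mathcal{H}_{c,A}\ne O$ together with normality forces the spectral radius, hence $\sup_{\mathbb{R}^d}|\varphi|$, to be strictly positive. Then $\sigma(\mathcal{H}_{c,A})$ is either a nontrivial closed annulus (if $\inf|\varphi|>0$, in which case $0\notin\sigma$) or a closed disc of positive radius (if $\inf|\varphi|=0$, in which case $0$ lies in the interior); in either situation $0$ is not an isolated point of $\sigma(\mathcal{H}_{c,A})$, so $0\notin\pi_{00}(\mathcal{H}_{c,A})$. Corollary \ref{Weyl} then gives $\sigma_{ew}(\mathcal{H}_{c,A})=\sigma(\mathcal{H}_{c,A})$. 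Invariance under compact perturbations is then immediate from the intersection characterization $\sigma_{ew}(T)=\bigcap_{K\in\mathcal{K}(L^2)}\sigma(T+K)$, which forces $\sigma(\mathcal{H}_{c,A})=\sigma_{ew}(\mathcal{H}_{c,A})\subseteq\sigma(\mathcal{H}_{c,A}+K)$ for every compact $K$.

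The only genuinely delicate point is the first reduction step, namely correctly reading off that $\Phi$ is scalar in the positive-definite case; once this is in hand, the rest is connectedness of a continuous image, rotational invariance inherited from Theorem \ref{th:1}(i), and a bookkeeping check that $0\notin\pi_{00}$.
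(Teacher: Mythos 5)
Your proposal is correct and follows essentially the same route as the paper: identify $\sigma(\mathcal{H}_{c,A})$ with $\mathrm{cl}(\varphi(\mathbb{R}^d))$ in the positive definite case (you rederive $\Phi=\varphi I_{2^d}$ from the definition of $\Omega_{ij}$, where the paper simply cites \cite{faa} and \cite[Corollary 7]{Forum}), combine rotational invariance from Theorem \ref{th:1}(i) with connectedness of the continuous image $|\varphi|(\mathbb{R}^d)$ to get the annulus \eqref{sp:pos}, use normality for the norm, and observe that the spectrum has no isolated points so that $\pi_{00}=\varnothing$ and Weyl's theorem \eqref{p00} gives $\sigma_{ew}=\sigma$. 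All steps check out; no gaps.
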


\begin{proof}
In the case of positive definiteness the spectrum   $\sigma(\mathcal{H}_{c,A})$ equals to the closure  $\mathrm{cl}(\varphi(\mathbb{R}^d))$  by \cite{faa}, \cite[Corollary 7]{Forum}. Since $N_2(c,A)<\infty$, the scalar symbol $\varphi$ is continuous on $\mathbb{R}^d$ and therefore its range is connected.
Since  the set  $\sigma(\mathcal{H}_{c,A})$ is  connected and rotationally invariant, it is an annulus (or a disc) centered at the origin.  
Moreover, the spectral radius of the operator $\mathcal{H}_{c,A}$ equals to $\sup_{\mathbb{R}^d}|\varphi|$, and equals to  its norm, since $\mathcal{H}_{c,A}$ is normal. This proves the first statement.
 Finally, in our case $\pi_{00}(\mathcal{H}_{c,A})=\varnothing$
 and  the last assertion follows from \eqref{p00}.
\end{proof}

\begin{corollary}\label{Neg} (cf. \cite{JMS}) Let in addition to the assumptions  of Theorem \ref{th:1} (i) all matrices $(A(k))_{k\in \mathbb{Z}}$ are negative definite and $\mathcal{H}_{c,A}\ne O$. Then   the spectrum $\sigma(\mathcal{H}_{c,A})$   is the
annulus (or a disc) of the form \eqref{sp:pos}. Moreover, $\sigma_{ew}(\mathcal{H}_{c,A})=\sigma(\mathcal{H}_{c,A})$. In particular, $\sigma(\mathcal{H}_{c,A})$ is invariant under compact perturbations of $\mathcal{H}_{c,A}$.
\end{corollary}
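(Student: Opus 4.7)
The plan is to reduce the spectral computation to the scalar symbol $\varphi$ by exploiting the rigid sign structure forced by negative definiteness, and then follow the script of Corollary \ref{Pos}. Since each $A(k)$ is negative definite, all of its eigenvalues satisfy $a_l(k)<0$, so the sign pattern $(\mathrm{sgn}(a_1(k)),\dots,\mathrm{sgn}(a_d(k)))$ is the constant vector $(-1,\dots,-1)$ for every $k\in\mathbb{Z}$. Let $\tau$ denote the fixed-point-free involution of $\{1,\dots,2^d\}$ that sends each hyperoctant index $i$ to the index of the opposite hyperoctant $-U_i$. First I would verify that $\Omega_{i,\tau(i)}=\mathbb{Z}$ and $\Omega_{ij}=\varnothing$ for $j\ne\tau(i)$, so that $\varphi_{ij}=\varphi$ when $j=\tau(i)$ and $\varphi_{ij}=0$ otherwise. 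Hence the matrix symbol factors as $\Phi(s)=\varphi(s)P_\tau$, where $P_\tau$ is the permutation matrix of $\tau$.

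Next I would diagonalize $P_\tau$. Because $\tau$ is a product of $2^{d-1}$ disjoint transpositions, $P_\tau$ is orthogonally similar to a block-diagonal matrix of $2\times 2$ swap blocks, so its spectrum is $\{+1,-1\}$ with each eigenvalue of multiplicity $2^{d-1}$. Consequently $\det(\lambda I_{2^d}-\Phi(s))=(\lambda^2-\varphi(s)^2)^{2^{d-1}}$, and substituting into \eqref{sigma} gives
\[
\sigma(\mathcal{H}_{c,A})=\mathrm{cl}(\varphi(\mathbb{R}^d))\cup\bigl(-\mathrm{cl}(\varphi(\mathbb{R}^d))\bigr).
\]
By Theorem \ref{th:1}(i) this set is rotationally invariant.

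To finish I would combine rotational invariance with continuity. Since $N_2(c,A)<\infty$, $\varphi$ is continuous and bounded, so $|\varphi|(\mathbb{R}^d)$ is a connected bounded subset of $[0,\infty)$ whose closure equals $[\inf|\varphi|,\sup|\varphi|]$. A closed rotationally invariant subset of $\mathbb{C}$ whose set of moduli equals this interval can only be the annulus (or disc) \eqref{sp:pos}, which is the first assertion. This spectrum has no isolated points, so $\pi_{00}(\mathcal{H}_{c,A})=\varnothing$, and Weyl's theorem \eqref{p00} yields $\sigma_{ew}(\mathcal{H}_{c,A})=\sigma(\mathcal{H}_{c,A})$; invariance under compact perturbations then follows from the identity $\sigma_{ew}(T)=\bigcap_{K\in\mathcal{K}(L^2(\mathbb{R}^d))}\sigma(T+K)$.

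I expect the only nontrivial obstacle to be the combinatorial bookkeeping around the involution $\tau$—verifying that the uniformly negative sign pattern collapses every $k$ into the single off-diagonal family indexed by $\tau$, and that $P_\tau$ has the claimed block decomposition into $2\times 2$ swaps with no fixed block. Once this setup is in place, the rest is a direct adaptation of the proof of Corollary \ref{Pos}.
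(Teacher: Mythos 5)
Your proof is correct, and it reaches the key intermediate identity $\sigma(\mathcal{H}_{c,A})=\mathrm{cl}(\varphi(\mathbb{R}^d))\cup\bigl(-\mathrm{cl}(\varphi(\mathbb{R}^d))\bigr)$ by a more self-contained route than the paper. The paper simply quotes this identity from \cite[Corollary 8]{Forum}, observes that $\varphi$ equals the scalar symbol $\varphi^-$ of $\mathcal{H}_{c,(-A)}$ (whose perturbation matrices are positive definite), and then invokes Corollary \ref{Pos} for $\mathcal{H}_{c,(-A)}$ to conclude that $\mathrm{cl}(\varphi^-(\mathbb{R}^d))$ is already a rotationally symmetric annulus, whence $-\mathrm{cl}(\varphi^-(\mathbb{R}^d))=\mathrm{cl}(\varphi^-(\mathbb{R}^d))$. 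You instead compute the matrix symbol from scratch: the uniform sign pattern $(-1,\dots,-1)$ forces $\Phi=\varphi P_\tau$ for the fixed-point-free involution $\tau$ pairing opposite hyperoctants, and diagonalizing $P_\tau$ gives $\det(\lambda I_{2^d}-\Phi(s))=(\lambda^2-\varphi(s)^2)^{2^{d-1}}$; this is exactly the block computation the paper performs elsewhere (in the proof of Theorem \ref{th:2}(i), via Schur's formula) but does not repeat here. Your finishing step also differs: rather than transferring rotational invariance from the positive-definite case, you apply Theorem \ref{th:1}(i) directly to $\mathcal{H}_{c,A}$ and pin down the set via its moduli $[\inf|\varphi|,\sup|\varphi|]$; this is in fact slightly more careful than the paper's own Corollary \ref{Pos} argument, which identifies the outer radius via the norm but is terser about the inner one. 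Two small points you glossed over but which hold: passing from $\inf_s|(\lambda-\varphi(s))(\lambda+\varphi(s))|=0$ to the union of closures uses the boundedness of $\varphi$ (the paper flags this in Theorem \ref{th:2}(i)), and $\pi_{00}=\varnothing$ uses $\sup|\varphi|=\|\mathcal{H}_{c,A}\|>0$ so the spectrum cannot degenerate to the single point $\{0\}$. Both approaches are sound; yours trades the external citation for an explicit computation and is the more portable argument.
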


\begin{proof} The scalar symbol of the operator $\mathcal{H}_{c,A}$ is
\begin{eqnarray*}
 \varphi(s)&:=&\sum_{k=-\infty}^\infty \frac{c(k)}{\sqrt{|\det A(k)|}}e^{-\imath s\cdot \log|a(k)|}\\
 &=&\sum_{k=-\infty}^\infty \frac{c(k)}{\sqrt{\det (-A(k))}}e^{-\imath s\cdot \log(- a(k))}.
\end{eqnarray*}
Thus, $ \varphi$ coincides with  the scalar symbol  $ \varphi^-$ of the operator  $\mathcal{H}_{c,(-A)}$ where all the matrices $(-A(k))$ are positive definite.

According to
\cite[Corollary 8]{Forum},  $\sigma(\mathcal{H}_{c,A})$ equals to the set
$$
-\mathrm{cl}(\varphi(\mathbb{R}^d))\cup \mathrm{cl}(\varphi(\mathbb{R}^d))=-\mathrm{cl}(\varphi^-(\mathbb{R}^d))\cup \mathrm{cl}(\varphi^-(\mathbb{R}^d)).
$$
 But as was shown in the previous corollary $\mathrm{cl}(\varphi^-(\mathbb{R}^d))$ is
 an annulus  (or a disc) centered at the origin and so 
 $-\mathrm{cl}(\varphi^-(\mathbb{R}^d))= \mathrm{cl}(\varphi^-(\mathbb{R}^d))= \mathrm{cl}(\varphi(\mathbb{R}^d))$. It follows that the spectrum
 $\sigma(\mathcal{H}_{c,A})$ is given by the formula \eqref{sp:pos}. The last assertion follows from \eqref{p00} as in the previous corollary.
\end{proof}

Now we shall consider the case, where $A(k)=\mathrm{diag}(a(k),\dots,a(k))$, $a(k)\ne 0$. In other words, we consider discrete Hausdorff operators of the form
\begin{equation}\label{Hca}
\mathcal{H}_{c,a}f(x):=\sum_{k=-\infty}^\infty c(k) f(a(k) x),\ x\in \mathbb{R}^d
\end{equation}
 provided the series converges absolutely.

Evidently every  one-dimensional  discrete Hausdorff operator has the form \eqref{Hca}.
 
 As above, we introduce the scalar symbol of $\mathcal{H}_{c,a}$ as
\begin{equation}
 \varphi(s):=\sum_{k=-\infty}^\infty c(k)|a(k)|^{-d/2}e^{-\imath \log |a(k)|\sum_{j=1}^d s_j}.
\end{equation}

We introduce also the conjugate  scalar symbol of $\mathcal{H}_{c,a}$ as
\begin{equation}
 \varphi^*(s):=\sum_{k=-\infty}^\infty c(k)\mathrm{sgn}(a(k))|a(k)|^{-d/2}e^{-\imath \log |a(k)|\sum_{j=1}^d s_j}.
\end{equation}


\begin{theorem}\label{th:2} Let $N_2(c,a)<\infty$. Then the following assertions hold.

(i) $$
\sigma(\mathcal{H}_{c, a})= \mathrm{cl}(\varphi(\mathbb{R}^d)\cup \varphi^*(\mathbb{R}^d))
$$
($\mathrm{cl}$ stands for the closure), and   $\|\mathcal{H}_{c, a}\|=\max\{\sup|\varphi|, \sup|\varphi^*|\}.$

(ii) Let the set $\{\log|a(k)|:k\in \mathbb{Z}\}$ be linear independent over $ \mathbb{Z}$.  Then the spectrum $\sigma(\mathcal{H}_{c,a})$   is
 an annulus of the form $\{r(c,A)\le|\zeta|\le \|\mathcal{H}_{c,a})\|\}$,  
    or the disc $\{|\zeta|\le \|\mathcal{H}_{c,a})\|\}$. Moreover, $\sigma_{ew}(\mathcal{H}_{c,a})=\sigma(\mathcal{H}_{c,a})$. Thus, $\sigma(\mathcal{H}_{c,a})$ is invariant under compact perturbations of $\mathcal{H}_{c,a}$.
\end{theorem}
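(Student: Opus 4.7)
The plan is to exploit the very special structure of the symbol matrix $\Phi$ in the case $A(k)=a(k)I_d$ and then combine Theorem~\ref{th:1}(i) with a Kronecker density argument applied to the closures of $\varphi(\mathbb{R}^d)$ and $\varphi^*(\mathbb{R}^d)$. For part~(i), the key observation is that all eigenvalues of $A(k)$ coincide with $a(k)$, so the sign vector $(\mathrm{sgn}\,a_1(k),\dots,\mathrm{sgn}\,a_d(k))$ is either $(1,\dots,1)$ or $(-1,\dots,-1)$. Consequently $\Omega_{ij}$ is empty unless $j=i$ (giving $\Omega^+:=\{k:a(k)>0\}$) or $j=i^*$, where $U_{i^*}=-U_i$ is the antipodal octant (giving $\Omega^-:=\{k:a(k)<0\}$). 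After reordering the indices so that $i$ and $i^*$ become adjacent, $\Phi(s)$ becomes block-diagonal with $2^{d-1}$ identical blocks
\[
M(s)=\begin{pmatrix}\Psi^+(s)&\Psi^-(s)\\ \Psi^-(s)&\Psi^+(s)\end{pmatrix},\qquad \Psi^\pm(s):=\sum_{k\in\Omega^\pm}\frac{c(k)}{|a(k)|^{d/2}}\,e^{-\imath(s_1+\cdots+s_d)\log|a(k)|}.
\]
Each $M(s)$ is unitarily diagonalized by the Hadamard matrix, with eigenvalues $\Psi^+(s)\pm\Psi^-(s)=\varphi(s),\varphi^*(s)$; hence $\det(\lambda I_{2^d}-\Phi(s))=((\lambda-\varphi(s))(\lambda-\varphi^*(s)))^{2^{d-1}}$ and $\|\Phi(s)\|=\max(|\varphi(s)|,|\varphi^*(s)|)$. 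Part~(i) then follows from \eqref{sigma} and the norm formula from \cite[Corollary~3]{Forum} quoted in Section~2.

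For part~(ii), the eigenvalue family of $A(k)$ reduces to the single number $a(k)$, so the linear-independence hypothesis is exactly the assumption of Theorem~\ref{th:1}(i) and $\sigma(\mathcal{H}_{c,a})$ is rotationally invariant. The decisive remaining step is to upgrade $\mathrm{cl}(\varphi(\mathbb{R}^d)\cup\varphi^*(\mathbb{R}^d))$ to a connected set. Put $b(k):=c(k)|a(k)|^{-d/2}\in\ell^1(\mathbb{Z})$ and
\[
\Sigma:=\Bigl\{\sum_{k\in\mathbb{Z}}b(k)z_k:(z_k)\in\mathbb{T}^{\mathbb{Z}}\Bigr\},
\]
which is the continuous image of a compact connected set (the defining map is continuous in the product topology by $\ell^1$-summability). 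I claim that $\mathrm{cl}(\varphi(\mathbb{R}^d))=\mathrm{cl}(\varphi^*(\mathbb{R}^d))=\Sigma$. The inclusion $\subseteq$ is immediate. For the reverse, given $(z_k)\in\mathbb{T}^{\mathbb{Z}}$ and $\varepsilon>0$, choose $N$ so that $\sum_{|k|>N}|b(k)|<\varepsilon$ and then use Kronecker's theorem (exactly as in the proof of Theorem~\ref{th:1}(i)) to pick $s\in\mathbb{R}^d$ with $|e^{-\imath(s_1+\cdots+s_d)\log|a(k)|}-z_k|$ small for $|k|\le N$; this makes $\varphi(s)$ close to $\sum_k b(k)z_k$. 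The same argument works for $\varphi^*$ because the sign $\mathrm{sgn}\,a(k)$ can be absorbed into the free choice of $z_k\in\mathbb{T}$, yielding the same limit set $\Sigma$.

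Combining these with (i) gives $\sigma(\mathcal{H}_{c,a})=\Sigma$, which is compact, connected, and rotationally invariant, hence an annulus $\{r(c,A)\le|\zeta|\le R\}$ or a disc $\{|\zeta|\le R\}$; normality of $\mathcal{H}_{c,a}$ identifies $R$ with the spectral radius, equal to $\|\mathcal{H}_{c,a}\|$. For the Weyl spectrum claim, rotational invariance places every nonzero spectral point on an entire circle contained in $\sigma$ and thus makes it non-isolated, while $0$ is not isolated in an annulus or disc either; therefore $\pi_{00}(\mathcal{H}_{c,a})=\varnothing$, and Corollary~\ref{Weyl} yields $\sigma_{ew}(\mathcal{H}_{c,a})=\sigma(\mathcal{H}_{c,a})$. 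I expect the main obstacle to be precisely the identification $\mathrm{cl}(\varphi(\mathbb{R}^d))=\mathrm{cl}(\varphi^*(\mathbb{R}^d))=\Sigma$; without it one only knows that $\sigma$ is a union of two connected pieces, possibly disconnected. The essential point is that $\mathbb{Z}$-linear independence of $(\log|a(k)|)$ makes all phases independently free, so the fixed sign pattern $\mathrm{sgn}\,a(k)$ becomes invisible in the closure.
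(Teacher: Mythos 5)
Your proposal is correct. Part (i) is essentially the paper's own argument: the paper likewise chooses the enumeration with $U_{2^{d-1}+j}=-U_j$, observes that $\Omega_{ij}$ is nonempty only for $j=i$ or for the antipodal index, writes $\Phi$ as the block matrix in $\varphi_{\pm}$, and factors $\det(\lambda I_{2^d}-\Phi)=((\lambda-\varphi)(\lambda-\varphi^*))^{2^{d-1}}$ via the Schur formula; your permutation into $2^{d-1}$ identical $2\times 2$ blocks and Hadamard diagonalization is the same computation in different packaging. Part (ii) is where you genuinely diverge. The paper truncates to $\mathcal{H}^{(n)}_{c,a}$, uses Kronecker to identify $\sigma(\mathcal{H}^{(n)}_{c,a})$ with $\xi_n(\mathbb{T}^{2n+1})$ (hence connected), and then transfers connectedness to the limit through the perturbation bound $\mathrm{dist}_H(\sigma(\mathcal{H}_{c,a}),\sigma(\mathcal{H}^{(n)}_{c,a}))\le\|R_n\|$ from Kato combined with Kuratowski's theorem that a Hausdorff limit of continua is a continuum, finally importing rotational invariance from Theorem \ref{th:1}(i). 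You instead work on the infinite torus: since $(b(k))\in\ell^1(\mathbb{Z})$, the map $z\mapsto\sum_k b(k)z_k$ is a uniform limit of finitely-supported sums, hence continuous on the compact connected space $\mathbb{T}^{\mathbb{Z}}$, so $\Sigma$ is a continuum and is manifestly rotation-invariant; the Kronecker-plus-tail estimate then identifies $\Sigma$ with $\mathrm{cl}(\varphi(\mathbb{R}^d))$ and with $\mathrm{cl}(\varphi^*(\mathbb{R}^d))$ in one stroke, the sign $\mathrm{sgn}\,a(k)$ being absorbed into the torus coordinate. This buys a shorter, more self-contained proof that dispenses with both the operator-norm perturbation estimate and Kuratowski's limit theorem, and yields rotational invariance without invoking Theorem \ref{th:1}(i) at all. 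One small point worth recording: in the degenerate case $\mathcal{H}_{c,a}=O$ the ``disc'' is $\{0\}$ and $0$ is isolated, so your remark that $0$ is non-isolated fails there; the conclusion $\pi_{00}(\mathcal{H}_{c,a})=\varnothing$ still holds because $0$ is then an eigenvalue of infinite multiplicity, but it deserves a sentence (the paper sidesteps this by assuming $\mathcal{H}_{c,A}\ne O$ in Corollary \ref{Pos}, to which it defers).
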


\begin{proof} (i) 
In order to employ  Theorem 1 from \cite{Forum} we   enumerate $d$-hyperoctants $U_j$ in such a way that $U_{2^{d-1}+j}=- U_j$ for $j= 1, \dots, 2^{d-1}.$ 
Then $\Omega_{ii}=\{k\in \mathbb{Z}: a(k)>0\},$ $\Omega_{ij}=\{k\in \mathbb{Z}: a(k)<0\}$ if $|j-i|=2^{d-1},$
and $\Omega_{ij}=\varnothing$  otherwise. It follows that
$$
\varphi_{ii}(s)=\varphi_+(s):=\sum_{k: a(k)>0} c(k)|a(k)|^{-d/2}e^{-\imath \log |a(k)|\sum_{j=1}^d s_j}.
$$
Analogously, if $|j-i|=2^{d-1},$
$$
\varphi_{ij}(s)=\varphi_-(s):=\sum_{k: a(k)<0} c(k)|a(k)|^{-d/2}e^{-\imath \log |a(k)|\sum_{j=1}^d s_j},
$$
and $\varphi_{ij}=0$ otherwise. Thus, the matrix symbol of $\mathcal{H}_{c,a}$  is the following block matrix:
$$\Phi=
\begin{pmatrix}
\varphi_+I_{2^{d-1}}&\varphi_- I_{2^{d-1}}\\ \varphi_- I_{2^{d-1}}&\varphi_+I_{2^{d-1}}
\end{pmatrix},
$$
where $I_{2^{d-1}}$ denotes the identity matrix of order
$2^{d-1}.$ Then for every $\lambda\in \mathbb{C}$
$$\lambda -\Phi=
\begin{pmatrix}
(\lambda-\varphi_+)I_{2^{d-1}}&-\varphi_- I_{2^{d-1}}\\ -\varphi_- I_{2^{d-1}}&(\lambda-\varphi_+)I_{2^{d-1}}
\end{pmatrix}
$$
and therefore by the formula of Schur (see, e.~g., \cite[p. 46]{G}),
\begin{eqnarray*}
\det(\lambda-\Phi) &= \det((\lambda-\varphi_+)^2  I_{2^{d-1}}-\varphi_-^2 I_{2^{d-1}})\\ &= ((\lambda-\varphi_+ -\varphi_-)(\lambda-\varphi_++\varphi_-))^{2^{d-1}}\\
&=
((\lambda-\varphi)(\lambda-\varphi^*))^{2^{d-1}},
\end{eqnarray*}
since $\varphi=\varphi_+ +\varphi_-,$   $\varphi^*=\varphi_+ -\varphi_-.$  Now Theorem 1 from \cite{Forum} implies that (we use the boundedness of $\varphi, \varphi^*$) that
$$
\sigma(\mathcal{H}_{c, a})=\{\lambda\in \mathbb{C}: \inf_{s\in \mathbb{R}^d}|(\lambda-\varphi(s))(\lambda-\varphi^*(s))|=0\}= \mathrm{cl}(\varphi(\mathbb{R}^d)\cup \varphi^*(\mathbb{R}^d)).
$$
In view of the normality of $\mathcal{H}_{c, a},$ this yields  that $\|\mathcal{H}_{c, a}\|=\max\{\sup|\varphi|, \sup|\varphi^*|\}.$

(ii)  Consider the truncated  operator
\[
\mathcal{H}_{c,a}^{(n)}f(x):=\sum_{k=-n}^n c(k) f(a(k) x),\  \  n\in \mathbb{N},
\]
in $L^2(\mathbb{R}^d)$.
We claim that 
\begin{equation}\label{sigma:trunk}
\mathrm{cl}(\varphi_n(\mathbb{R}^d))=\left\{\xi_n(t): t=(t_{-n},\dots,t_{n})\in \mathbb{T}^{2n+1} \right\},
\end{equation}
where
\[
\xi_n(t):=\sum_{k=-n}^n c(k)|a(k)|^{-d/2}t_k.
\]
Indeed, the scalar symbol (\ref{sym}) of $\mathcal{H}_{c,a}^{(n)}$ is a trigonometric polynomial of the form
\begin{equation}\label{symbol:n}
\varphi_n(s)=\sum_{k=-n}^n c(k)|a(k)|^{-d/2}e^{-\imath  \log|a(k)|\sum_{j=1}^d s_j},\  s\in \mathbb{R}^d.
\end{equation}

As in the proof of Theorem \ref{th:1} the corollary of 
Kronecker's approximation theorem  implies that the set
\begin{equation}\label{dence}
\{\Lambda(s):=(e^{-\imath \log|a(-n)|\sum_{j=1}^d s_j},\dots, e^{-\imath \log |a(n)|\sum_{j=1}^d s_j}): s\in \mathbb{R}^d\}
\end{equation}
is dense in $\mathbb{T}^{2n+1}$. Moreover, since $\xi_n$ is a continuous and closed map on $\mathbb{T}^{2n+1}$,
we have $\xi_n(\mathrm{cl}(M))=\mathrm{cl}(\xi_n(M))$ for all $M\subset \mathbb{T}^{2n+1}$, 
(see, e.g., \cite[Chapter 1, \S 5, Proposition 9]{Bourb}). Therefore,
\[
\mathrm{cl}(\varphi_n(\mathbb{R}^d))=\mathrm{cl}(\xi_n(\Lambda(\mathbb{R}^d)))=\xi_n(\mathrm{cl}(\Lambda(\mathbb{R}^d)))=\xi_n( \mathbb{T}^{2n+1}).
\]
Similarly, if we  let
\[
\xi_n^*(t):=\sum_{k=-n}^n c(k)\mathrm{sgn}(a(k))|a(k)|^{-d/2}t_k,
\]
then 
\[
\mathrm{cl}(\varphi_n^*(\mathbb{R}^d))=\xi_n^*( \mathbb{T}^{2n+1}).
\]
Since 
$$
\xi_n( \mathbb{T}^{2n+1})=\xi_n^*( \mathbb{T}^{2n+1}),
$$
we conclude that  the spectrum
$$ 
\sigma(\mathcal{H}_{c, a}^{(n)})= \mathrm{cl}(\varphi_n(\mathbb{R}^d)\cup \varphi_n^*(\mathbb{R}^d))=\xi_n(\mathbb{T}^{2n+1})
$$
is connected as the continuous image of the connected set.

Consider the operator $R_n:=\mathcal{H}_{c, a}-\mathcal{H}_{c, a}^{(n)}$. 
Since $\|R_n\|\to 0$ ($n\to\infty$)  (see the proof of Theorem \ref{th:1}),
the formula \eqref{distRn} shows that  $\sigma(\mathcal{H}_{c, a})$ is a limit of $\sigma(\mathcal{H}_{c, a}^{(n)})$ in Hausdorff metric. 
Let $K$ denotes a compact plane set that contains $\sigma(\mathcal{H}_{c, a})$ and all  $\sigma(\mathcal{H}_{c, a}^{(n)})$. By \cite[Chapter 4, \S 42, II, Theorem 2]{Kuratowski2}
 $(2^K)_m=2^K$, where $(2^K)_m$ denotes the space of  closed subspaces of $K$ endowed with Hausdorff metric.  Since the sets $\sigma(\mathcal{H}_{c, a}^{(n)})$ are connected, 
 the set $\sigma(\mathcal{H}_{c, a})$ is also connected  by \cite[Chapter 5, \S 46, II, Theorem 14]{Kuratowski2}. Application of Theorem \ref{th:1} completes the proof of the first assertion of the part (ii). The proof of the second assertion is  the same as for the similar assertion in the Corollary \ref{Pos}.
 \end{proof}

\begin{remark}
Under the conditions of Theorem \ref{th:2} (ii), or corollaries \ref{Pos}, \ref{Neg} the spectrum  $\sigma(\mathcal{H}_{c, a})$ is connected. In general the problem of connectedness of $\sigma(\mathcal{H}_{c, a})$ under the conditions of Theorem \ref{th:1} (i) is open.

\end{remark}

\begin{corollary}\label{Pos:p} (cf.  \cite{JMS}).  Let $A(k)\in \mathrm{GL}(d,\mathbb{R})$  ($k\in \mathbb{Z} $) be a commuting family of positive definite matrices, and $N_2(c,A)<\infty$.  Let the scalar symbol $\varphi$ of   $\mathcal{H}_{c,A}$ be real analytic. Then $\lambda\in \sigma_p(\mathcal{H}_{c,A})$  if and only if $\mathcal{H}_{c,A}=\lambda I$.
\end{corollary}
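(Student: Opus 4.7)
The plan is to reduce the problem to a statement about a scalar symbol by exploiting that positive definiteness of the $A(k)$ collapses the matrix symbol $\Phi$ to a scalar multiple of the identity. First I would observe that if every $A(k)$ is positive definite, then all eigenvalues $a_l(k)$ are strictly positive, so $\mathrm{sgn}(a_l(k))=1$ for every $l$ and $k$. From the definition of $\Omega_{ij}$ this forces $\Omega_{ii}=\mathbb{Z}$ (corresponding to $\varepsilon(i,i)=(1,\dots,1)$) and $\Omega_{ij}=\varnothing$ for $i\ne j$. Hence $\varphi_{ii}=\varphi$ on the diagonal and $\varphi_{ij}=0$ off the diagonal, so the matrix symbol becomes $\Phi(s)=\varphi(s)I_{2^d}$.

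Next I would apply Theorem \ref{th:1}(ii), whose hypothesis is satisfied because every nonzero entry of $\Phi$ is a copy of the real analytic function $\varphi$. It yields that $\lambda\in\sigma_p(\mathcal{H}_{c,A})$ if and only if each matrix $\Phi(s)=\varphi(s)I_{2^d}$ has $\lambda$ as an eigenvalue, which is equivalent to $\varphi(s)=\lambda$ for every $s\in\mathbb{R}^d$.

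Finally, if $\varphi\equiv\lambda$, then by Corollary \ref{Pos} one has $\sigma(\mathcal{H}_{c,A})=\mathrm{cl}(\varphi(\mathbb{R}^d))=\{\lambda\}$, so the normal operator $\mathcal{H}_{c,A}-\lambda I$ has spectrum $\{0\}$ and hence norm $0$, giving $\mathcal{H}_{c,A}=\lambda I$. The converse is trivial, since every nonzero vector is an eigenvector of $\lambda I$. There is no real obstacle; the only step requiring care is the correct identification of the sets $\Omega_{ij}$, and hence the reduction of $\Phi$ to a scalar matrix, in the positive definite case. Once this reduction is in hand, the assertion is a direct consequence of Theorem \ref{th:1}(ii) combined with Corollary \ref{Pos}.
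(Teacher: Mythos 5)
Your proposal is correct and follows essentially the same route as the paper: both reduce to $\Phi=\varphi I_{2^d}$ in the positive definite case (the paper cites this from an earlier reference, you derive it from the definition of the $\Omega_{ij}$), apply Theorem \ref{th:1}(ii) to get $\lambda\in\sigma_p$ iff $\varphi\equiv\lambda$, and then use the formula $\sigma(\mathcal{H}_{c,A})=\mathrm{cl}(\varphi(\mathbb{R}^d))=\{\lambda\}$ together with normality to conclude $\mathcal{H}_{c,A}=\lambda I$.
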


\begin{proof} One  can assume that $\mathcal{H}_{c,A}\ne O$.  We use the statement (ii). It was shown in \cite[Corollary 7]{Forum} that in the case of positive definiteness one has $\Phi=\mathrm{diag}(\varphi,\dots,\varphi)$.
 It follows that $\lambda\in \sigma(\Phi(s))$ for all $s\in \mathbb{R}^d$, i.~e.,
 $\det(\lambda-\Phi(s))\equiv 0$, if and only if $\varphi(s)\equiv\lambda$. This yields  by \cite{faa}, \cite[Corollary 7]{Forum} that  
 $$
 \sigma(\mathcal{H}_{c,A})= \mathrm{cl}(\varphi(\mathbb{R}^d))=\{\lambda\}.
 $$ 
Since  $\mathcal{H}_{c,A}$ is normal, this implies that $\mathcal{H}_{c,A}=\lambda I$.
 \end{proof}

\begin{corollary}\label{Neg:p}   Let $A(k)\in \mathrm{GL}(d,\mathbb{R})$  ($k\in \mathbb{Z} $) be a commuting family of negative definite matrices,  and $N_2(c,A)<\infty$.  Let the scalar symbol $\varphi$ of $\mathcal{H}_{c,A}$ be real analytic. Then $\lambda\in \sigma_p(\mathcal{H}_{c,A})$  if and only if 
$\mathcal{H}_{c,A}=\pm\lambda J$, where $Jf(x)= f(-x)$.
\end{corollary}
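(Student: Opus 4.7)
The plan is to mirror the proof of Corollary \ref{Pos:p}, now in the negative definite setting, with the main tool being Theorem \ref{th:1}(ii). First I would compute the matrix symbol $\Phi$ explicitly; then I would use continuity and connectedness to deduce that $\varphi$ must be the constant $\pm\lambda$; and finally I would reduce to the positive definite case by factoring off the reflection operator $J$.

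For the explicit form of $\Phi$, I would enumerate the hyperoctants $U_j$ so that $U_{2^{d-1}+j}=-U_j$ for $j=1,\dots,2^{d-1}$, exactly as in the proof of Theorem \ref{th:2}. Since every $A(k)$ is negative definite, each coordinate of $\mathrm{sgn}\,a(k)$ equals $-1$, so $\Omega_{ij}\ne\varnothing$ only when $\varepsilon(i,j)=(-1,\dots,-1)$, i.e.\ when $|j-i|=2^{d-1}$; for such pairs $\Omega_{ij}=\mathbb{Z}$ and $\varphi_{ij}=\varphi$. Consequently
\[
\Phi(s)=\begin{pmatrix}0 & \varphi(s)I_{2^{d-1}}\\ \varphi(s)I_{2^{d-1}} & 0\end{pmatrix},
\]
and a short Schur-type computation (using that the scalar blocks commute) gives $\det(\lambda I_{2^d}-\Phi(s))=(\lambda-\varphi(s))^{2^{d-1}}(\lambda+\varphi(s))^{2^{d-1}}$, so the eigenvalues of $\Phi(s)$ are precisely $\pm\varphi(s)$.

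Since $\Phi$ inherits real analyticity from $\varphi$, Theorem \ref{th:1}(ii) then says that $\lambda\in\sigma_p(\mathcal{H}_{c,A})$ if and only if $\varphi(s)\in\{\lambda,-\lambda\}$ for every $s\in\mathbb{R}^d$. Because $\varphi$ is continuous on the connected space $\mathbb{R}^d$, the two disjoint closed preimages $\varphi^{-1}(\lambda)$ and $\varphi^{-1}(-\lambda)$ cannot both be nonempty and proper when $\lambda\ne 0$, so $\varphi\equiv\varepsilon\lambda$ for some fixed $\varepsilon\in\{+1,-1\}$ (the case $\lambda=0$ is trivial).

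For the final reduction, set $B(k):=-A(k)$; these form a commuting family of positive definite matrices with $N_2(c,B)=N_2(c,A)<\infty$ and with the same scalar symbol $\varphi$, and a one-line check shows $\mathcal{H}_{c,A}=\mathcal{H}_{c,B}J$. Applying Corollary \ref{Pos:p} to $\mathcal{H}_{c,B}$ with the constant scalar symbol $\varepsilon\lambda$ yields $\mathcal{H}_{c,B}=\varepsilon\lambda I$, whence $\mathcal{H}_{c,A}=\varepsilon\lambda J=\pm\lambda J$. The converse is immediate: if $\mathcal{H}_{c,A}=\varepsilon\lambda J$, then any nonzero even function $f$ satisfies $\mathcal{H}_{c,A}f=\varepsilon\lambda f$ and any nonzero odd function $g$ satisfies $\mathcal{H}_{c,A}g=-\varepsilon\lambda g$, so both $\pm\lambda$ (and in particular $\lambda$) lie in $\sigma_p(\mathcal{H}_{c,A})$. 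The main obstacle is the first step -- fixing the enumeration of hyperoctants and verifying that all diagonal blocks of $\Phi$ vanish while the off-diagonal blocks equal $\varphi I_{2^{d-1}}$; once this block anti-diagonal structure is established, every subsequent step is a short application of earlier results.
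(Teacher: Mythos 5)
Your proof is correct, but it takes a genuinely different route from the paper's. The paper avoids the matrix symbol entirely: it decomposes an eigenfunction $f$ of $\mathcal{H}_{c,A}$ into its even and odd parts $f=f_e+f_o$, uses the elementary identities $\mathcal{H}_{c,A}f_e=\mathcal{H}_{c,(-A)}f_e$ and $\mathcal{H}_{c,A}f_o=-\mathcal{H}_{c,(-A)}f_o$ (valid because each $-A(k)$ is positive definite, so $\mathcal{H}_{c,(-A)}$ preserves parity), and from the even/odd splitting of $\mathcal{H}_{c,A}f=\lambda f$ concludes that $\mathcal{H}_{c,(-A)}f_e=\lambda f_e$ and $\mathcal{H}_{c,(-A)}f_o=-\lambda f_o$; applying Corollary \ref{Pos:p} to whichever part is nonzero gives $\mathcal{H}_{c,(-A)}=\pm\lambda I$, i.e.\ $\mathcal{H}_{c,A}=\pm\lambda J$. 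You instead compute the block anti-diagonal symbol (essentially the computation from Theorem \ref{th:2}(i) specialized to $\varphi_+\equiv 0$, $\varphi_-=\varphi$), invoke Theorem \ref{th:1}(ii) to get $\varphi(s)\in\{\lambda,-\lambda\}$ for all $s$, and use connectedness of $\mathbb{R}^d$ to force $\varphi\equiv\varepsilon\lambda$; this buys the explicit eigenvalue structure $\pm\varphi(s)$ of $\Phi(s)$ at the cost of redoing the Schur determinant argument, whereas the paper's argument is shorter and purely operator-theoretic. One small point of precision: Corollary \ref{Pos:p} as stated lets you conclude $\mathcal{H}_{c,B}=\varepsilon\lambda I$ only after knowing $\varepsilon\lambda\in\sigma_p(\mathcal{H}_{c,B})$, which you have not directly established; what you actually need is the implication proved inside that corollary (constant scalar symbol $\Rightarrow$ $\sigma(\mathcal{H}_{c,B})=\{\varepsilon\lambda\}$ $\Rightarrow$ $\mathcal{H}_{c,B}=\varepsilon\lambda I$ by normality), which is available and closes the argument.
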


\begin{proof}  One  can assume that $\mathcal{H}_{c,A}\ne O$. First note that if a function $f_e$ from $L^2(\mathbb{R}^d)$ is even that
\[
\mathcal{H}_{c,A}f_e(x)=\sum_{k\in (\mathbb{Z}}c(k)f_e(-A(k)x)=\mathcal{H}_{c,(-A)}f_e(x).
\]
Similarly, if a function $f_o$ from $L^2(\mathbb{R}^d)$ is odd that
\[
\mathcal{H}_{c,A}f_o(x)=-\sum_{k\in (\mathbb{Z}}c(k)f_e(-A(k)x)=-\mathcal{H}_{c,(-A)}f_e(x).
\]
Now let $f$ from $L^2(\mathbb{R}^d)$ is an eigenfunction of $\mathcal{H}_{c,A}$ with an eigenvalue $\lambda$, i.e., $\mathcal{H}_{c,A}f=\lambda f$, and $f\ne 0$.
We have $f=f_e+f_o$ with an even part  $f_e\in L^2(\mathbb{R}^d)$ and an odd part $f_o\in L^2(\mathbb{R}^d)$, and therefore $\mathcal{H}_{c,A}f_e+\mathcal{H}_{c,A}f_o=\lambda f_e+\lambda f_o$.   Due to the previous equalities, this implies    that 
\[
\mathcal{H}_{c,(-A)}f_e-\lambda f_e=\mathcal{H}_{c,(-A)}f_o+\lambda f_o.
\]
Since the left-hand side here is even, and  the right-hand one  is odd, this yields that both functions are zero. Thus,
\[
\mathcal{H}_{c,(-A)}f_e=\lambda f_e, \mbox{ and } \mathcal{H}_{c,(-A)}f_o=(-\lambda) f_o.
\]
Let $f_e\ne 0$. Since $-A(k)$ is positive definite, Corollary \ref{Pos:p} implies  that $\mathcal{H}_{c,(-A)}=\lambda I$, i.e., $\mathcal{H}_{c,A}=\lambda J$.
Similarly, if $f_o\ne 0$, we have by the Corollary \ref{Pos:p}, that $\mathcal{H}_{c,A}=-\lambda J$.
The converse is obvious. 
\end{proof}

\section{Several examples}

Below we list several examples of functional differential equations with discrete Hausdorff operators.

 \begin{example} The functional  differential   equation
\begin{equation*}
y'(t)=\sum_{k\in E} c(k)y(a(k)t)
\end{equation*}
($E$ is a finite subset of $\mathbb{Z}_+$) with a one-dimensional discrete Hausdorff operator $\mathcal{H}_{c,a}$ of the form \eqref{Hca} in the right-hand side is called the multi-pantograph equation.
Theorem \ref{th:2} describes the spectrum of the operator  $\mathcal{H}_{c,a}$ even for the case where $E=\mathbb{Z}$.  
\end{example}

Special cases  and analogs of  the multi-pantograph equation  have found applications in number theory,  
dynamical systems, probability, quantum mechanics, a current
collection system for an electric locomotive, biology, economy, control,
and electrodynamics (see, e.g., \cite{BellenZennaro}, \cite{Liu-Li}).

  \begin{example}
{\rm The next functional  partial differential equation  arises in a model of cell growth
\begin{equation*}
\frac{\partial n(x,t)}{\partial  t}+q\frac{\partial n(x,t)}{\partial  x}=c(0)n(x,t)+c(1)n(\alpha x,t)\quad (q>0, \alpha>0)
\end{equation*}
(see, e.g., \cite{Zaidi-Van Brunt-Wake}). The right-hand side here is a two-term two-dimensional discrete Hausdorff operator with positive definite commuting perturbation matrices
$A(0)=I_2$, $A(1)=\mathrm{diag}(\alpha,1)$. Formula \eqref{sigma} implies that the spectrum of this operator is the circle with center $c(0)$ and radius $|c(1)|/\sqrt{\alpha}$.}
 \end{example}

 \begin{example}
{\rm In \cite{Ross} the following functional  partial differential equation 
\begin{equation}\label{DeltaR}
-\Delta Ru(x)=f(x)
\end{equation}
  was considered   with  a discrete Hausdorff operator
\[
Ru(x)=\sum_{k\in E}c(k)u(q^{-k}x),
\]
 where $c(k)\in\mathbb{C}$, $q>1$, $E$ is a finite subset of $\mathbb{Z}$. In this case, $A(k)=\mathrm{diag}(q^{-k},\dots,q^{-k})$. Here  Theorem \ref{th:2} is applicable. 
 One has
 \[
 \varphi(s)=\varphi^*(s)=
\sum_{k\in E}c(k)(q^{\frac{d}{2}}e^{-\imath (\log q)\sum_{j=1}^d s_j})^k,\  s\in \mathbb{R}^d.
 \]
 Thus, by the aforementioned theorem we have $\sigma(R)=r(q^{\frac{d}{2}}\mathbb{T})$ in $L^2(\mathbb{R}^d)$, where
 \[
 r(z)=\sum_{k\in E}c(k)z^k.
 \]
This  result was obtained in \cite[\S 1.1]{Ross} using a different method. In particular, $R$ is invertible if and only if $r(z)\ne 0$ for $|z|=q^{\frac{d}{2}}$. In this case, equation \eqref{DeltaR} in $L^2(\mathbb{R}^d)$ reduces to the Poisson equation. Since  $R$ is normal,  it follows that $\|R\|=\max\{|r(z)|: |z|=q^{\frac{d}{2}}\}$. Under the condition of boundedness given by Lemma \ref{lem1}
these results are valid for the case of infinite $E\subseteq\mathbb{Z}$, too.}
 \end{example} 

We shall give some applications of Theorem \ref{th:1} to several pantograph-type partial differential equations.
 
  \begin{example}
{\rm Consider the following multidimensional pantograph-type PDE in $L^2(\mathbb{R}^d)$
$$
\frac{\partial u(t,\cdot)}{\partial t}=\sum_{k=-\infty}^\infty c(k)u(t, A(k)\cdot)+Ku(t, \cdot)\equiv(\mathcal{H}_{c,A}+K)u(t,\cdot)
$$
with unknown differentiable function   $t\mapsto u(t,\cdot)$ $:\mathbb{R}\to L^2(\mathbb{R}^d)$ where  $\mathcal{H}_{c,A}$ and a compact operator $K$  in $L^2(\mathbb{R}^d)$ act with respect to $x$.
As usual we can rewrite the last equation in the form
\begin{equation}\label{partpantograph3}
\frac{du}{dt}=(\mathcal{H}_{c,A}+K)u.
\end{equation}
 It is known \cite[\S II.3, Theorem 3.1]{Daleckii-Krein} that if all solutions of 
 such equation are  bounded on $\mathbb{R}$, then $\sigma(\mathcal{H}_{c,A}+K)\subset \imath\mathbb{R}$.
Let    
$\mathcal{H}_{c,A}\ne O$. Since $\mathcal{H}_{c,A}$ is normal, it follows  that $\sigma(\mathcal{H}_{c,A})\ne \{0\}$. Then by Corollary \ref{Weyl} under the  conditions of  Theorem \ref{th:1}(i) we have $\sigma_{ew}({\mathcal{H}}_{c,A})\ne  \{0\}$. In turn, this implies that  $\sigma_{ew}({\mathcal{H}}_{c,A}+K)= \sigma_{ew}({\mathcal{H}}_{c,A})\ne \{0\}$, too. Since $\sigma_{ew}({\mathcal{H}}_{c,A})$ is rotationally invariant, we conclude that 
 the equation (\ref{partpantograph3}) has unbounded solutions  in $L^2(\mathbb{R}^d)$.}
 \end{example}

 \begin{example}
{\rm Similarly to  the previous example, Theorem 3.2 from  \cite[\S II.3]{Daleckii-Krein} shows that under the  conditions of  Theorem \ref{th:1}(i) 
if $\mathcal{H}_{c,A}\ne O$   and $K$ is a compact  operator   in $L^2(\mathbb{R}^d)$ the  equation 
$$
\frac{d^2u}{dt^2}=(\mathcal{H}_{c,A}+K)u
$$
 has unbounded solutions in $L^2(\mathbb{R}^d)$    by Corollary \ref{Weyl}.}
 \end{example}

  \begin{example} 
{\rm  Consider an inhomogeneous equation
\begin{equation}\label{inhom}
\frac{du}{dt}=(\mathcal{H}_{c,A}+K)u+f
\end{equation}
with given continuous function  $f:\mathbb{R}\to L^2(\mathbb{R}^d)$ (again, $\mathcal{H}_{c,A}$ and compact operator $K$ act in $L^2(\mathbb{R}^d)$). We say that this equation has the bounded uniqueness  property if for each bounded $f$ it has a unique bounded solution $u:\mathbb{R}\to L^2(\mathbb{R}^d)$. It is known \cite[\S II.4, Theorem 4.1]{Daleckii-Krein} that if $\sigma(\mathcal{H}_{c,A}+K)$ is disconnected the equation (\ref{inhom}) has the bounded uniqueness  property if and only if the spectrum $\sigma(\mathcal{H}_{c,A}+K)$ does not intersect $\imath\mathbb{R}$. Thus, under the  conditions of  Theorem \ref{th:1}(i) if  
$\mathcal{H}_{c,A}\ne O$ Corollary \ref{Weyl} shows that
the  equation (\ref{inhom}) does not enjoy the bounded uniqueness property if $\sigma(\mathcal{H}_{c,A})$ is not an annulus  (or a disc).}
 \end{example}

\section{acknowledgments}
The author is partially supported by the State Program of Scientific Research
of Republic of Belarus, project No. 20211776
 and by the Ministry of Education and Science of Russia,  agreement No. 075-02-2023-924.

\section{Data availability statement}
The author confirms that all datal generated or analyzed during this study
are included in this article.
This work does not have any conflicts of interest.

\

Department of Mathematics and Programming Technologies, Francisk Skorina Gomel State University, Gomel, 246019, Belarus $\&$ Regional Mathematical Center, Southern Federal
University, Rostov-on-Don, 344090, Russia.


\begin{thebibliography}{99}

\bibitem{Albanese-Bonet-Ricker}
Albanese, A. A.; Bonet Solves, J. A.; Ricker, W. J.,  Mean ergodicity and spectrum of the
Ces\`{a}ro operator on weighted $c_0$ spaces, Positivity, 20, 761--803 (2016).
https://doi.org/10.1007/s11117-015-0385-x

\bibitem{BellenZennaro}
	 A. Bellen and M. Zennaro, Numerical Methods for Delay Differential Equations, Numerical Mathematics
and Scientific Computation, Oxford University Press, Oxford, UK, 2003.

\bibitem{Berb}
S. K. Berberian, The Weyl Spectrum of an Operator,
Indiana University Math. J.,
Vol. 20, No. 6, pp. 529--544 (1970).

\bibitem{BONET}
J. Bonet,  Hausdorff Operators on Weighted Banach Spaces of Type $H^\infty$, Complex Anal. Oper. Theory 16, 12 (2022). https://doi.org/10.1007/s11785-021-01189-1






\bibitem{Bourb}
N. Bourbaki, ``Elements de Mathematique, Premiere Partie, Livre III, Topologie Generale, Chapitre 1, Chapitre 2'', Hermann, Paris, 1966.

\bibitem{BM}
G. Brown,  F. M\'{o}ricz, Multivariate Hausdorff operators on the spaces $L^p(\mathbb{R}^n),$
J. Math. Anal. Appl., 271, 443--454  (2002)



\bibitem{CFL}
J. Chen, D. Fan, and Li, J.,  "Hausdorff operators on function spaces", \textit{Chin. Ann. Math. Ser. B }, \textbf{33} (4), 537--556 (2012).


\bibitem{CFW}
J. Chen, D. Fan, S. Wang, Hausdorff operators on Euclidean space (a survey article), Appl. Math. J. Chinese Univ. Ser. B (4), 28,  548--564 (2014)


\bibitem{Conway}
Conway J.B. The theory of subnormal operators, AMS, Providence, Rhode Island (1991).


\bibitem{Curbera_Ricker}
Curbera, G.P., Ricker, W.J., Spectrum of the Ces\`{a}ro operator in $\ell^p$, Arch. Math., 100, 267--271 (2013).

\bibitem{Daleckii-Krein}
Daleckii Ju.L., Krein M.G., Stability of solutions of differential equations in Banach space, AMS, Providence, Rhode Island (1974).

\bibitem{CDFZ}
Chen J C, Dai J W, Fan D S, et al. Boundedness of Hausdorff operators on Lebesgue spaces and Hardy
spaces, Sci China Math,  61, 1647--1664 (2018) https://doi.org/10.1007/s11425-017-9246-7

\bibitem{EE}
D. E. Edmunds,  W. D. Evans, Spectral Theory and 
Differential Operators, Clarendon Press, Oxford, 1987. 

\bibitem{G}
F. R. Gantmacher, The Theory of Matrices, Vol. 1, AMS Chelsea Publishing, Providence, RI (1960).


\bibitem{Gus}
K. Gustafson, Weyl's Theorems,  Linear Operators and Approximation, 
Proceedings of the Conference 
held at the Oberwolfach Mathematical Research Institute, Black Forest, 
August 14--22, 1971. 
Edited by 
P. L. Butzer, J.-P. Kahane and B. Szokefalvi-Nagy, 1972, p. 80 -- 93. 

\bibitem{H}
 Hardy G. H. Divergent Series, Oxford,  Clarendon Press (1949).

\bibitem{JJS}
P. Jain, S. Jain, V. D. Stepanov, LCT based integral transforms and Hausdorff
operators, Eurasian Math. J., Vo. 11, no 1, 57--71  (2020),
DOI: https://doi.org/10.32523/2077-9879-2020-11-1-57-71.

\bibitem{Kato}
T. Kato, "Perturbation theory of  linear operators",  \textit{Springer-Verlag}, Berlin-Heidelberg-New York (1966).



\bibitem{KP}
 S.G. Krantz,  H.R. Parks, A primer of real analytic functions, 2ed., Birkhauser, Boston-Basel-Berlin (2002).


\bibitem{Kuratowski}
K. Kuratowski, Topology, Academic Press, vol. 1,  New York-London, 1966.

\bibitem{Kuratowski2}
K. Kuratowski, Topology, Academic Press, vol. 2, New York-London, 1968.



\bibitem{KL}
E. Liflyand, A.  Karapetyants,
"Defining Hausdorff operators on Euclidean spaces", \textit{Math
Meth Appl Sci.}, \textbf{43}, No. 16, 1--12  (2020).


\bibitem{Liu-Li}
 Liu M. Z., Li D., Properties of analytic solution and numerical solution of multi-pantograph equation, Applied Mathematics and Computation, 155(3), 853--871 (2004).
 


\bibitem{LZh}
B.M. Levitan, V.V. Zhikov, Almost Periodic Functions and Differential Equations,
Moscow Univ. Publ. House (1978). English translation by Cambridge University Press
(1982).





\bibitem{LL}
A. Lerner,  E. Liflyand, Multidimensional Hausdorff operators on the real
Hardy space, J. Austr. Math. Soc., 83, 7--86  (2007)


\bibitem{Ls}
E. Liflyand, Hausdorff operators on Hardy spaces, Eurasian Math.  J.,
no. 4,  101 -- 141  (2013)

\bibitem{LifMir}
E. Lifyand, A. Mirotin, On Algebras of Hausdorff Operators on the Real Line, J. Math. Sci. Ser. A, 2023,https://doi.org/10.1007/s10958-023-06275-7


\bibitem{LM}
E. Liflyand, F. M\'{o}ricz, The Hausdorff operator is bounded on the real Hardy
space $H^1(\mathbb{R})$,  Proc. Am. Math. Soc., 128, 1391 -- 1396 (2000)

\bibitem{VI}
V. I. Mironenko, Embeddability of holomorphic differential systems, \textit{Vestnik of Belarusian State University, ser. 1}, No 3, 20--23  (1971) (Russian).

\bibitem{MF}
L. D. Faddeev , S. P. Merkuriev,  Quantum Scattering Theory for Several Particle Systems, Springer; 1993.

\bibitem{MCAOT}   A. R. Mirotin,   Hausdorff Operators on Some Spaces of Holomorphic
Functions on the Unit Disc, CAOT (2021), https://doi.org/10.1007/s11785-021-01128-0.

\bibitem{MCAOT2}  A. R. Mirotin,   Generalized Hausdorff-Zhu Operators on M\"{o}bius Invariant
Spaces, Complex Analysis and Operator Theory (2022),  
https://doi.org/10.1007/s11785-022-01278-9

\bibitem{KM} A.  Karapetyants,    A. R. Mirotin,  A class of Hausdorff-Zhu operators, Analysis and Mathematical Physics (2022),
https://doi.org/10.1007/s13324-022-00681-x

\bibitem{KGM}
  S. Grudsky, A. Karapetyants,  A. R. Mirotin,   Estimates for singular numbers of Hausdorff-Zhu operators
and applications, Math. Meth. Appl., Sci. (2022), DOI: 10.1002/mma.9080


\bibitem{makarov}
K. A. Makarov, Asymptotic  Expansions for Fourier Transform of Singular Self-Affine Measures,  J. Math. Anal. Appl., 186, 259 -- 286  (1994).

\bibitem{JMAA}
A. R. Mirotin, Boundedness of Hausdorff operators on
Hardy spaces $H^1$ over locally compact groups,  J. Math. Anal. Appl., 473, 519 -- 533  (2019), DOI  10.1016/j.jmaa.2018.12.065. Preprint arXiv:1808.08257v4

\bibitem{Nachr}
A. R. Mirotin,  Hausdorff Operators on compact Abelian groups, Math. Nachr., (2023) DOI: 10.1002/mana.202200068

\bibitem{homogen}
A. R. Mirotin,   Hausdorff Operators on real Hardy Spaces $H^1$ Over Homogeneous Spaces with Locally Doubling Property,   Analysis Math., 47 (2) (2021), 385--403,
DOI: 10.1007/s10476-021-0087-5

\bibitem{Lie}
A. R. Mirotin,    Boundedness of Hausdorff Operators on Hardy Spaces
over Homogeneous Spaces of Lie Groups, Journal of Lie Theory,
 31 (2021) 1015--1024.
 
 
\bibitem{JOTH} 
A. R. Mirotin,     Hausdorff Operators Over  Double  Coset Spaces of  Groups
 with Locally Doubling Property, Journal of Mathematical Sciences,
https://doi.org/10.1007/s10958-022-06174-3

\bibitem{faa}
 A. R. Mirotin,  On the Structure of Normal Hausdorff Operators
on Lebesgue Spaces, Functional Analysis and Its Applications, \textbf{53},  261--269 (2019).

\bibitem{Forum}
 A. R. Mirotin,  On the description of multidimensional
normal Hausdorff operators on Lebesgue
spaces, \textit{Forum Math.},  \textbf{32},  111--119 (2020).

\bibitem{RJMP}
 A. R. Mirotin,
A Hausdorff Operator with Commuting Family
of Perturbation Matrices is a Non-Riesz Operator, Russian Journal of Mathematical Physics,  \textbf{27}, No. 4, 488--494 ( 2020). 


\bibitem{JMS}
A. R. Mirotin, To the Spectral Theory of  Discrete Hausdorff Operators, J. Math. Sci., Ser. A (2023), https://doi.org/10.1007/s10958-023-06259-7

\bibitem{Ross}
L. E. Rossovskii, Elliptic functional differential equations with contractions and extensions of independent variables of the unknown function, 
Journal of Mathematical Sciences, \textbf{223}, no. 4, 351--493  (2017).

\bibitem{Ross2}
L. E. Rossovskii, Elliptic   functional differential equations with incommensurable contractions, Math. Model. Nat. Phenom., 12, 226--239  (2017), 
https://doi.org/10.1051/mmnp/2017075

\bibitem{RF}
 J. Ruan, D. Fan, Hausdorff operators on the power weighted Hardy spaces, J. Math. Anal. Appl., 433(1), 31 -- 48  (2016)

\bibitem{Zaidi-Van Brunt-Wake}
Zaidi A.~A, Van Brunt~B., Wake~G.~C., Solutions to an advanced functional partial differential equation of the pantograph
type, Proc. R. Soc. A 471: 20140947 (2015). http://dx.doi.org/10.1098/rspa.2014.0947
\end{thebibliography}
\end{document}